\numberwithin{equation}{section}
\newtheorem{theorem}{Theorem}[section]
\newtheorem{lemma}[theorem]{Lemma}
\newtheorem{conj}[theorem]{Conjecture}
\theoremstyle{definition}
\newtheorem{remark}[theorem]{Remark}
\theoremstyle{remark}
\newenvironment{romenumerate}[1][0pt]{
\addtolength{\leftmargini}{#1}\begin{enumerate}
 }{\end{enumerate}}
\newcounter{oldenumi}
{\setcounter{oldenumi}{\value{enumi}}
\begin{romenumerate} \setcounter{enumi}{\value{oldenumi}}}
{\end{romenumerate}}
\newcounter{thmenumerate}
\newcounter{romxenumerate}   
\newcounter{xenumerate}   
\newcommand{\refT}[1]{Theorem~\ref{#1}}
\newcommand{\refL}[1]{Lemma~\ref{#1}}
\newcommand{\refR}[1]{Remark~\ref{#1}}
\newcommand{\refS}[1]{Section~\ref{#1}}
\newcommand{\refConj}[1]{Conjecture~\ref{#1}}
\newcommand{\refand}[2]{\ref{#1} and~\ref{#2}}
\newcommand\marginal[1]{\marginpar{\raggedright\parindent=0pt\tiny #1}}
\xdef\klockan{\the\count1.0\the\count255}
\xdef\klockan{\the\count1.\the\count255}\fi
\newcommand\set[1]{\ensuremath{\{#1\}}}
\newcommand\bigpar[1]{\bigl(#1\bigr)}
\newcommand\Bigpar[1]{\Bigl(#1\Bigr)}
\newcommand\lrpar[1]{\left(#1\right)}
\newcommand\abs[1]{|#1|}
\newcommand\bigabs[1]{\bigl|#1\bigr|}
\newcommand\Bigabs[1]{\Bigl|#1\Bigr|}
\newcommand\lrabs[1]{\left|#1\right|}
\def\rompar(#1){\textup(#1\textup)}    
\newcommand\Bigparfrac[2]{\Bigpar{\frac{#1}{#2}}}
\def\xexp(#1){e^{#1}}
\newcommand\ie{i.e.\spacefactor=1000}
\newcommand\eg{e.g.\spacefactor=1000}
\newcommand\ii{\mathrm{i}}
\newcommand{\tend}{\longrightarrow}
\newcommand\dto{\overset{\mathrm{d}}{\tend}}
\newcounter{CC}
\newcounter{cc}
\newcommand\E{\operatorname{\mathbb E{}}}
\renewcommand\P{\operatorname{\mathbb P{}}}
\newcommand\Var{\operatorname{Var}}
\newcommand\gd{\delta}
\newcommand\gf{\varphi}
\newcommand\gs{\sigma}
\newcommand\gss{\sigma^2}
\newcommand\gth{\theta}
\newcommand\gthh{\theta^2}
\newcommand\eps{\varepsilon}
\newcommand\cN{\mathcal N}
\def\[#1]{[\![#1]\!]}
\newcommand\qq{^{1/2}}
\newcommand\qqw{^{-1/2}}
\newcommand\qw{^{-1}}
\newcommand\qww{^{-2}}
\renewcommand{\=}{:=}
\newcommand\intoooo{\int_{-\infty}^\infty}
\newcommand\dd{\,\textup{d}}
\newcommand\citetq[2]{\citeauthor{#2} \cite[{\frenchspacing #1}]{#2}} 
\newcommand\aam{{a_1,\dots,a_m}}
\newcommand\aamp{{a_1+\dots+a_m}}
\newcommand\maam{M_{\aam}}
\newcommand\mab{M_{a,b}}
\newcommand\ax{a^*}
\newcommand\Ax{N_*}
\newcommand\nux{^{(\nu)}}
\newcommand\ab{_{a,b}}
\newcommand\awb{_{a-1,b}}
\newcommand\eith{e^{\ii\gth}}
\newcommand\bfa{\mathbf{a}}
\newcommand\gfd[1]{\gf^{(#1)}}
\newcommand\A{B}
\newcommand\REM[1]{{\raggedright\texttt{[#1]}\par\marginal{XXX}}}
\newenvironment{comment}{\setbox0=\vbox\bgroup}{\egroup} 
\newcommand\urladdrx[1]{{\urladdr{\def~{\Tilde}#1}}}
\renewcommand\Tilde{{\tiny$\sim$}}
\begin{document}
\title[The Mahonian probability distribution on words]
{The Mahonian Probability Distribution
on Words is Asymptotically Normal}
\thanks{Accompanied by Maple package MahonianStat available from
\hfill\break
\texttt{
http://www.math.rutgers.edu/\Tilde zeilberg/mamarim/mamarimhtml/mahon.html}.\\
The work of D. Zeilberger was supported in part
by the United States of America National Science Foundation.
The work of E. R. Canfield was supported in part by the NSA Mathematical
Sciences Program.
}

\date{August 13, 2009} 

\author{E. Rodney CANFIELD}
\address{Computer Science Department,
University of Georgia, Athens, GA 30602-7404, USA}
\email{erc [At] cs [Dot] uga [Dot] edu}

\author{Svante Janson}
\address{Department of Mathematics, Uppsala University, PO Box 480,
SE-751~06 Uppsala, Sweden}
\email{svante.janson [At] math [Dot] uu [Dot] se}
\urladdrx{http://www.math.uu.se/~svante/}

\author{Doron ZEILBERGER}
\address{Mathematics Department, Rutgers University
(New Brunswick), Piscataway, NJ 08854, USA}
\email{zeilberg [At] math [Dot] rutgers [Dot] edu}

\subjclass[2000]{} 

\begin{abstract}  The Mahonian statistic is the number of inversions
in a permutation of a multiset with $a_i$ elements of type $i$,
$1\le i\le m$.  The counting function for this statistic is the
$q$ analog of the multinomial coefficient 
$\binom{a_1+\cdots+a_m}{a_1,\dots a_m}$, 
and the probability generating function is the
normalization of the latter.    We give two proofs that the
distribution
is asymptotically normal.  The first is {\it computer-assisted},
based on the method of moments.  The Maple package {\tt MahonianStat},
available from the webpage of this article, can be used by the
reader to perform experiments and calculations.  Our second proof
uses characteristic functions.  We then take up the study of a local
limit theorem to accompany our central limit theorem.  Here our result
is less general, and we must be content with a conjecture about
further work.  Our local limit theorem permits us to conclude that the
coeffiecients of the $q$-multinomial are log-concave, provided one stays
near the center (where the largest coefficients reside.)
\end{abstract}

\dedicatory{This article is dedicated to Dennis Stanton,  $q$-grandmaster and versatile
unimodaliter (and log-concaviter)}

\maketitle


\section{Introduction}\label{S:intro}

The most important discrete probability distribution,
by far, is the \emph{Binomial distribution},
$B(n,p)$ for which we know everything \emph{explicitly},
$\P(X=i)$ (${\binom{n}{i}}p^i(1-p)^{n-i}$),
the probability generating function ($(pt+(1-p))^n$),
the moment generating function ($(pe^t+1-p)^n$), etc. etc.
Most importantly, it is \emph{asymptotically normal}, which
means that the normalized random variable
$$
Z_n=\frac{X_n-np}{\sqrt{np(1-p)}} 
$$
tends to the standard Normal distribution $N(0,1)$,
as $n \rightarrow \infty$.

Another important discrete distribution function is
the {\it Mahonian} distribution, defined on the
set of {\it permutations} on $n$ objects, and describing,
inter-alia, the random variable ``number of inversions''. 
(Recall that an inversion in a permutation 
$\pi_1, \dots, \pi_n$ is a pair $1 \leq i < j \leq n$
such that $\pi_i>\pi_j$).
Let us call this random variable $M_n$.
The probability generating function, due to Netto, is
given {\it explicitly} by:
\begin{equation}
  \label{netto}
F_n(q)=\frac{1}{n!} \prod_{i=1}^{n} {\frac{1-q^i}{1-q}} \quad .
\end{equation}

  The formula \eqref{netto} has a simple probabilistic interpretation
(see Feller's account in \cite[Section X.6]{FellerI}):  If $Y_j$ is the 
  number of  $i$ with $1 \leq i < j$
and $\pi_i>\pi_j$, then 
\begin{equation}
 \label{nettoy} 
M_n=Y_1+\dots+ Y_n, 
\end{equation}
and $Y_1,\dots,Y_n$ are
  independent random variables
and $Y_j$ is uniformly distributed on \set{0,\dots,j-1},
  as is easily seen by constructing $\pi$ by inserting $1,\dots,n$ in
  this order at random positions; thus $Y_j$ has probability
  generating function $(1-q^j)/(j(1-q))$.
It follows from \eqref{netto} or  \eqref{nettoy} by simple
calculations that the Mahonian distribution has mean and variance
\begin{align}
  \E M_n & = \frac{n(n-1)}4,\label{em}\\
  \Var M_n & = \frac{n(n-1)(2n+5)}{72} 
= \frac{2n^3+3n^2-5n}{72}.
\label{varm}
\end{align}

Even though there is no explicit expression for the coefficients
themselves (\ie{} for the exact probabilitity that a permutation
of $n$ objects would have a certain number of inversions), it is a classical
result (see \cite[Section X.6]{FellerI}),
that follows from an extended form of the Central Limit Theorem,
that the normalized version
$$
\frac {M_n-n(n-1)/4}{\sqrt{(2n^3+3n^2-5n)/72} }   \quad,
$$
tends to $N(0,1)$, as $n \rightarrow \infty$.
So this sequence of probability distributions, too, is asymptotically normal.

But what about {\it words}, also known as 
{\it multi-set permutations}?. Permutations on $n$ objects can
be viewed as words in the alphabet $\{1,2, \dots , n\}$, where
each letter shows up {\it exactly} once. But what if we allow
{\it repetitions}? I.e., we consider all words with
$a_1$ occurrences of $1$, $a_2$ occurrences of $2$,
$\dots$, $a_m$ occurrences of $m$. 
(We assume throughout that $m\ge2$ and each $a_j\ge1$.)
We all know that the number
of such words is the multinomial coefficient
\begin{equation*}
\binom{\aamp}{\aam}
\end{equation*}
and many of us also know
that the number of such words with exactly $k$ inversions is
the coefficient of $q^k$ in the $q$-analog of the multinomial
coefficient
\begin{equation}
  \label{qmulti}
\binom{\aamp}{\aam}_q :=
{{[a_1+ \dots +a_m]!} \over
{[a_1]! \cdots [a_m]!}} \quad,
\end{equation}
where $[n]!:=[1][2] \cdots [n]$, and $[n]:=(1-q^n)/(1-q)$;
see \cite[Theorem 3.6]{Andrews}.
Assuming that all words are equally likely (the uniform 
distribution), the probability generating function is thus
\begin{equation}
  \label{F}
F_{a_1, \dots, a_m}(q):=
{
{(\prod_{i=1}^{m} a_i!) \cdot \prod_{i=1}^{a_1+\dots+a_m} (1-q^i)}
\over
{(a_1 + \dots + a_m)! \prod_{j=1}^{m} \prod_{i=1}^{a_j} (1-q^i) }
}
=
\frac{F_{a_1+\dots+a_m}(q)}{F_{a_1}(q)\dotsm F_{a_m}(q)}.
\end{equation}
Indeed, this can be seen as follows. Let $\maam$ denote the number of
inversions in a random word. If we distinguish the $a_i$ occurrences
of $i$ by adding different fractional parts, in random order, the
number of inversions will increase by $Z_i$, say, with the same
distribution as $M_{a_i}$; further $\maam$ and $Z_1,\dots,Z_m$ are
independent. On the other hand, $\maam+Z_1+\dots+Z_m$ has the same
distribution as $M_{a_1+\dots+a_m}$. Hence,
\begin{equation}\label{FF}
  F_{a_1,\dots,a_m}(q) F_{a_1}(q)\dotsm F_{a_m}(q)=
  F_{a_1+\dots+a_m}(q),
\end{equation}
which is \eqref{F}.

By \eqref{F}, we further have the factorization
\begin{equation}\label{F2}
  F_\aam(q)=\prod_{j=2}^m F_{A_{j-1},a_j}(q),
\end{equation}
where $A_j\= a_1+\dots+ a_j$, which reduces the general case to the
two-letter case.

Note that \eqref{F} shows that the distribution of $\maam$ is
invariant if we permute $a_1,\dots,a_m$; a symmetry which is not
obvious from the definition.

\begin{remark}
  \label{Rpart}
 The two-letter case is particularly
interesting, since the unnormalized generating function
$$
\binom{a+b}{a}F\ab(q)={{(1-q^{a+b})(1-q^{a+b-1}) \cdots (1-q^{a+1})}
\over
{(1-q^{b})(1-q^{b-1}) \cdots (1-q^{1})}} 
=\frac{[a+b]!}{[a]!\,[b]!}
\quad,
$$
(the $q$-binomial coefficient in \eqref{qmulti})
is the same as the generating function for the set of 
integer-partitions with largest part $\leq a$ and
$\leq b$ parts, in other words the set
of integer-partitions whose Ferrers diagram lies
inside an $a$ by $b$ rectangle, where the random variable is
the ``number of dots'' (\ie{} the integer being partitioned).
In other words, the number of such partitions of an integer $n$ equal
the number of words of $a$ 1's and $b$ 2's with $n$ inversions.
See \citetq{Section 3.4}{Andrews}.
\end{remark}

It is easy to see that the {\it mean} of $\maam$ is
$$
\mu(a_1, \dots, a_m):=\E\maam=e_2(a_1, \dots, a_m)/2
$$
(here $e_k(a_1, \dots, a_m)$ is
the degree $k$ elementary symmetric function), so
considering the shifted random variable
$\maam-\mu(\aam)$, 
``number of inversions minus the mean'', 
we get that the probability generating function is
\begin{equation}  \label{G}
G_{a_1, \dots, a_m}(q):=
q^{-\mu(\aam)}F_\aam(q)
=
{{F_{a_1, \dots, a_m}(q)} \over {q^{e_2(a_1, \dots , a_k)/2} }}
\end{equation}
By computing $(q(qG)')'$ and plugging-in $q=1$, 
or from \eqref{FF} and \eqref{em}--\eqref{varm},
it is easy to see
that the {\it variance} $\sigma^2:=\Var\maam$ is
\begin{equation}\label{sigma}
\sigma^2= { {(e_1+1)e_2-e_3} \over {12}} \quad.
\end{equation}
(By $\sigma$ we mean $\sigma(a_1, \dots, a_m)$ and
we omit the arguments $(a_1, \dots, a_m)$ from the
$e_i$'s.)

Let $N\=e_1=\aamp$, the length of the random word,
and let $\ax:=\max_j a_j$ and $\Ax:=N-\ax$.

One main result of the present article is:
\begin{theorem}
  \label{T1}
Consider the random variable, $\maam$,
``number of inversions'',
on the (uniform) sample space of words with $a_1$ 1's,
$a_2$ 2's, $\dots$, $a_m$ $m$'s.
For any sequence of sequences 
$(a_1,\dots,a_m)=(a\nux_1,\dots,a\nux_{m\nux})$ such that
$\Ax\=N-\ax\to\infty$, 
the sequence of normalized random variables
$$
X_{a_1, \dots, a_m}=
\frac{\maam-\mu(a_1, \dots, a_m) }{\sigma(a_1, \dots , a_m)} \quad,
$$
tends to the standard normal distribution $\cN(0,1)$, 
as $\nu \rightarrow \infty$.
\end{theorem}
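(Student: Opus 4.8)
The plan is to prove \refT{T1} by the method of characteristic functions, reducing to the two-letter case via the factorization \eqref{F2}. Put $A_j\=a_1+\dots+a_j$. By \eqref{F2} we may write $\maam\eqd\sum_{j=2}^m W_j$ with $W_j$ \emph{independent} and $W_j\eqd M_{A_{j-1},a_j}$; correspondingly $\sigma^2=\sum_{j=2}^m\sigma_j^2$, where $\sigma_j^2\=\Var M_{A_{j-1},a_j}=A_{j-1}a_j(A_j+1)/12$ by the two-letter instance of \eqref{sigma}. The permutation symmetry noted after \eqref{F2} lets us list the $a_i$ in any order, say $a_1\le\dots\le a_m$. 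Because reversing a word gives $M_{a,b}\eqd ab-M_{a,b}$ (and $\maam$ is symmetric about $e_2/2$ in general), the centred characteristic functions $\psi_{a,b}(s)\=\E\exp\!\bigl(\ii s(M_{a,b}-ab/2)\bigr)$ are real and even; by L\'evy's continuity theorem it then suffices to show, for each fixed $t\in\bbR$, that $\prod_{j=2}^m\psi_{A_{j-1},a_j}(t/\sigma)\to e^{-t^2/2}$.

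The analytic core is a sharp estimate for one two-letter factor. From \eqref{F} one gets the explicit product
\begin{equation*}
\psi_{a,b}(s)=\binom{a+b}{b}^{-1}\prod_{i=1}^{b}\frac{\sin\bigl(s(a+i)/2\bigr)}{\sin(si/2)} .
\end{equation*}
Expanding $\log(\sin x/x)=-x^2/6+O(x^4)$ termwise, for $|s|(a+b)$ bounded one obtains $\log\psi_{a,b}(s)=-\tfrac12 s^2\sigma_{a,b}^2+O\bigl(s^4\,ab(a+b)^3\bigr)$; separately one needs the genuine two-letter central limit theorem $\psi_{a,b}(s/\sigma_{a,b})\to e^{-s^2/2}$ when $\min(a,b)\to\infty$ (and a crude bound keeping $|\psi_{a,b}|$ away from $1$ for larger $|s|$), which can be read off from the $\sin$-product or from the representation of $M_{a,b}$ as a conditionally i.i.d.\ sum given a uniform Dirichlet parameter. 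The hypothesis $\Ax=N-\ax\to\infty$ enters here: using $\sigma^2\ge Ne_2/18$ and $e_2\ge\ax\Ax$ one checks that any block $W_{j_0}$ carrying a non-vanishing fraction of $\sigma^2$ must have \emph{both} $A_{j_0-1}\to\infty$ and $a_{j_0}\to\infty$, so such a block is asymptotically normal on its own; and if $\ax$ is bounded along a subsequence then $\max_j\sigma_j/\sigma\to0$ there, so no dominant block exists and Lyapunov's condition applies directly (the range of $W_j$ is at most $A_{j-1}a_j\le(12\sigma_j^2)^{2/3}$).

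It remains to put the factors together at $s=t/\sigma$. One partitions $\{2,\dots,m\}$ by the size of $|s|A_j$: on the part where $|s|A_j$ is small the Taylor estimate applies termwise and one must show the errors sum to $o(1)$, using $\sum_j\sigma_j^2=\sigma^2$ together with the lower bound $\sigma\ge c\sqrt{N\Ax}$ forced by $\Ax\to\infty$; on the complementary (thin) part one peels off the at most $O_\delta(1)$ blocks with $\sigma_j^2\ge\delta\sigma^2$ --- each a two-letter Mahonian with both parameters large, handled by the CLT above --- discards the remaining blocks (which then carry only $o(\sigma^2)$ of the variance) by a Lindeberg-type truncation, and lets $\delta\to0$ slowly. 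This yields $\log\prod_j\psi_{A_{j-1},a_j}(t/\sigma)=-\tfrac12 t^2+o(1)$, as required. (Alternatively one could run the method of moments: $\kappa_r(\maam)=\sum_j\kappa_r(W_j)$, and one would need $\kappa_r(\maam)/\sigma^r\to0$ for $r\ge3$, again reducing to bounding the cumulants of the dominant two-letter block.)

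The step I expect to be the genuine obstacle is this final assembly --- in particular, handling the many blocks with $A_j$ close to $N$ but small individual variance (for which the crude bound $A_j\le N$ in the Taylor error is far too lossy), so that their cumulative contribution is controlled, and establishing the two-letter central limit theorem with enough uniformity in $a,b$. The reduction \eqref{F2}, the variance formula, and L\'evy's theorem are routine; the condition $\Ax\to\infty$ is exactly what the delicate step requires, and since the conclusion fails when $\Ax$ stays bounded (e.g.\ $a_1\to\infty$, $a_2\equiv1$, where $M_{a_1,a_2}/a_1$ has a non-degenerate non-normal limit), there is no slack to be gained.
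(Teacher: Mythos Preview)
Your framework is right --- characteristic functions, the factorization \eqref{F2}, Taylor expansion of each two-letter factor, and L\'evy's continuity theorem --- and this is exactly the route the paper takes in \refS{Spf2}. But the ``genuine obstacle'' you identify at the end is self-inflicted: it disappears with one change of convention.

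Order the $a_i$ in \emph{decreasing} order, $a_1\ge a_2\ge\dots\ge a_m$, so that $a_1=\ax$ and $A_{j-1}\ge a_j$ for every $j\ge2$. The Taylor estimate for a single factor (your $\log\psi$ expansion, equivalently the paper's \eqref{r3}) then gives a quartic remainder $O\bigl(a_j A_{j-1}^4\theta^4\bigr)$, uniformly for $|\theta|\le 1/N$, and summing over $j$ yields at once
\begin{equation*}
\sum_{j=2}^m a_j A_{j-1}^4\,\theta^4 \;\le\; N^4\theta^4\sum_{j=2}^m a_j \;=\; N^4\Ax\,\theta^4,
\end{equation*}
because with this ordering $\sum_{j\ge2}a_j=N-a_1=N-\ax=\Ax$ exactly. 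Combined with the lower bound $\sigma^2\ge N^2\Ax/36$ (which follows from your own $\sigma^2\ge Ne_2/18$ together with $2e_2\ge N\Ax$; your later ``$\sigma\ge c\sqrt{N\Ax}$'' is a slip --- the correct order is $\sigma\ge cN\sqrt{\Ax}$), one gets for $\theta=t/\sigma$ first $|N\theta|=O\bigl(\Ax^{-1/2}\bigr)\to0$, so the Taylor expansion is valid for \emph{every} factor simultaneously, and then the total quartic error is $O\bigl(N^4\Ax/\sigma^4\bigr)=O(1/\Ax)\to0$. That is the entire proof: no partition by the size of $|s|A_j$, no dominant-block analysis, no separate two-letter CLT, no Lindeberg truncation.

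With your increasing ordering $a_1\le\dots\le a_m$, the sum $\sum_{j\ge2}a_j=N-a_1$ can be as large as $N-1$, the crude bound gives only $O(N^5\theta^4)$, and the argument stalls exactly where you say it does. The fix is not more analysis; it is putting the largest letter first.
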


\refT{T1} includes both the case when $m\ge2$ is fixed, and the case
when $m\to\infty$. 
If $m$ is fixed and $a_1\ge a_2\ge\dots\ge a_m$, as may be assumed by
symmetry, then  the condition $\Ax\to\infty$
is equivalent to $a_2\to\infty$.
In the case $m\to\infty$, the assumption $\Ax\to\infty$
is redundant, because $\Ax\ge m-1$. 

\begin{remark}
  The condition $\Ax\to\infty$ is also necessary for asymptotic
  normality, see \refS{Sfine}.
\end{remark}

\begin{comment}
{\bf Theorem}: Consider the random variable, $inv$,
``number of inversions'',
on the (uniform) sample space of words with $a_1t$ 1's,
$a_2t$ 2's, $\dots$, $a_mt$ $n$'s, then
the sequence of normalized random variables
$$
X_{a_1t, \dots, a_mt}(w):=
{{inv(w)-\mu(a_1t, \dots, a_mt) } \over {\sigma(a_1t, \dots , a_mt)}} \quad,
$$
tends to the standard normal distribution $N(0,1)$, for
any {\it fixed} $a_1, \dots, a_m$, as $t \rightarrow \infty$.
\end{comment}

We give a short proof of this result using characteristic functions in
\refS{Spf2}. 
We  give first in \refS{Spf1} 
another proof (at least of a special case)
that is {\it computer-assisted}, using the Maple package
{\tt MahonianStat} available from the webpage of this article:
\\
{\tt
http://www.math.rutgers.edu/\Tilde zeilberg/mamarim/mamarimhtml/mahon.html},
\\
where one can also find sample input and output.
This first proof uses the {\it method of moments}.

We conjecture that \refT{T1} can be refined to a local limit theorem
as follows:

\begin{conj}\label{Conjlocal}
Uniformly for all $\aam$ and all integers $k$,
  \begin{equation}\label{local}
\P(\maam=k)
=\frac1{\sqrt{2\pi}\gs}\Bigpar{e^{-(k-\mu)^2/(2\gs^2)}+O\Bigparfrac1{\Ax}}.	
  \end{equation}
\end{conj}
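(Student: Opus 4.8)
We do not have a proof of \refConj{Conjlocal}, but we outline the natural approach and isolate the obstruction. The plan is to run a Fourier (characteristic function) argument, upgrading the proof of \refT{T1} in \refS{Spf2} from a central to a \emph{local} limit theorem. Since $\maam$ is integer valued, Fourier inversion gives
\begin{equation*}
  \P(\maam=k)=\frac1{2\pi}\int_{-\pi}^{\pi}F_\aam(\eith)\,e^{-\ii k\gth}\dd\gth,
\end{equation*}
while the Gaussian density in \eqref{local} has the analogous representation $\frac1{\sqrt{2\pi}\gs}e^{-(k-\mu)^2/(2\gss)}=\frac1{2\pi}\int_{\bbR}e^{\ii\mu\gth-\gss\gth^2/2}\,e^{-\ii k\gth}\dd\gth$. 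Writing $G_\aam(\eith)=e^{-\ii\mu\gth}F_\aam(\eith)$ as in \eqref{G} and discarding the part $\abs\gth>\pi$ of the Gaussian integral (super-exponentially small in $\gs$, and $\gs\to\infty$ with $\Ax$), the estimate \eqref{local} will follow from the uniform bound
\begin{equation*}
  \int_{-\pi}^{\pi}\bigabs{G_\aam(\eith)-e^{-\gss\gth^2/2}}\dd\gth=O\Bigparfrac1{\gs\Ax}.
\end{equation*}
One cuts $[-\pi,\pi]$ into the \emph{central} range $\abs\gth\le A/\gs$, with $A=A(\nu)\to\infty$ slowly (say $A=\Ax^{1/8}$), and the \emph{rest}.

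On the central range one expands $\log G_\aam(\eith)=\sum_{r\ge1}\kappa_{2r}(\ii\gth)^{2r}/(2r)!$, where the decisive structural input is that \emph{the Mahonian distribution on words is symmetric}, $F_\aam(q)=q^{e_2}F_\aam(1/q)$ (reverse each word), so every odd cumulant of $\maam-\mu$ vanishes and the leading correction to the Gaussian is the $\kappa_4\gth^4$ term rather than a $\kappa_3\gth^3$ term. From $M_N\eqd\maam+Z_1+\dots+Z_m$ with independent $Z_i\eqd M_{a_i}$ (the construction behind \eqref{FF}), $M_n=Y_1+\dots+Y_n$ as in \eqref{nettoy}, and $\kappa_{2r}(Y_j)=\Theta(j^{2r})$, one gets $\kappa_{2r}(\maam)=\kappa_{2r}(M_N)-\sum_i\kappa_{2r}(M_{a_i})$, and a short computation using $N^{2r+1}-\sum_i a_i^{2r+1}=\Theta(N^{2r}\Ax)$ gives $\kappa_{2r}(\maam)=\Theta(N^{2r}\Ax)$ for every $r\ge1$ (the case $r=1$ being $\gss=\Theta(N^2\Ax)$, consistent with \eqref{sigma}), whence $\abs{\kappa_{2r}(\maam)}/\gs^{2r}=\Theta(\Ax^{1-r})$ and in particular $\abs{\kappa_4(\maam)}=\Theta(\gs^4/\Ax)$. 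Exponentiating, $G_\aam(\eith)=e^{-\gss\gth^2/2}\bigpar{1+O(\gs^4\gth^4/\Ax)}$ uniformly for $\abs\gth\le A/\gs$ (the choice $A^4/\Ax\to0$ legitimizes the expansion and makes the $r\ge3$ terms negligible), and multiplying by the Gaussian weight and integrating contributes $O(1/(\gs\Ax))$, as \eqref{local} demands.

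The \emph{rest} of the range is the crux, and everything there hinges on a uniform upper bound for $\abs{F_\aam(\eith)}$ when $\gth$ is bounded away from $0$. Using $\abs{F_n(\eith)}=\tfrac1{n!}\prod_{j=1}^n\abs{\sin(j\gth/2)}/\abs{\sin(\gth/2)}$ together with \eqref{FF},
\begin{equation*}
  \bigabs{F_\aam(\eith)}=\frac{\prod_i a_i!}{N!}\cdot\frac{\prod_{j=1}^{N}\abs{\sin(j\gth/2)}}{\prod_{i}\prod_{j=1}^{a_i}\abs{\sin(j\gth/2)}},
\end{equation*}
and by Jensen's inequality (equidistribution) the sine ratio is $e^{o(N)}$ away from rational multiples of $2\pi$, so there $\abs{F_\aam(\eith)}=e^{o(N)}\prod_i(a_i/N)^{a_i}$ with $\prod_i(a_i/N)^{a_i}\le e^{-c\Ax}$; this disposes of $\gth$ in a fixed set avoiding the rationals, while $A/\gs<\abs\gth\le\delta$ is handled by a quadratic bound $\abs{G_\aam(\eith)}\le e^{-c\gss\gth^2}$ valid in a fixed neighborhood of $0$. \emph{The genuine obstacle is a resonance $\gth\approx2\pi p/q$ with $q$ small}: there $\abs{F_\aam(\eith)}$ is not super-exponentially small — by the evaluation of $q$-multinomial coefficients at roots of unity (the $q$-Lucas theorem) the numerator collapses to a $q$-multinomial of the base-$q$ digits of $a_1,\dots,a_m$, so the decay is governed by $\Ax$ alone, at best $e^{-\gO(\Ax)}$, and not by $\gs$ or $N$. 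But $\gs=\Theta(N\Ax^{1/2})$ can be astronomically larger than $e^{\Ax}$ when $N/\Ax$ is large, so ``$e^{-\gO(\Ax)}$'' does not beat the target $1/(\gs\Ax)$: what one actually needs is the $o(N)$ above sharpened to $o(\Ax\log(N/\Ax))$ \emph{uniformly in $\gth$}, i.e.\ $q$-binomial estimates near resonances carrying dependence on $\ax$ as well as on $\Ax$. Proving such estimates uniformly in $\aam$, in $\gth$ and in $q$ is exactly what we cannot do in general, and is why \refConj{Conjlocal} is stated as a conjecture; in the regimes of \refS{Sfine} — for instance $m$ fixed, where \eqref{F2} reduces matters to the two-letter case and one may feed in known local asymptotics for the number of partitions inside an $\ax\times\Ax$ box (cf.\ \refR{Rpart}) and then convolve $m-1$ such estimates — the difficulty becomes manageable.
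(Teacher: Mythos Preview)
Your outline is in the same spirit as the paper's treatment: this is an unproven conjecture, and both you and the paper attack it by Fourier inversion, handle the central region by Taylor expansion of $\log G_\aam(\eith)$ (your cumulant phrasing is equivalent to the paper's \eqref{pag}, and the symmetry you invoke is exactly why no $\gth^3$ term appears), and isolate the tail integral as the obstruction. The paper packages this cleanly as \refL{Lmagnus}, which reduces \eqref{local} to the single estimate $\int_{\tau/N}^\pi|F_\aam(\eith)|\dd\gth=O(1/(\gs\Ax))$.

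Two points where the paper is sharper than your sketch. First, your claim that the quadratic bound $|G_\aam(\eith)|\le e^{-c\gss\gthh}$ holds on a \emph{fixed} neighborhood $|\gth|\le\gd$ is not correct: the paper's \refL{LG1} establishes it only for $|\gth|\le\tau/N$, and this range is essentially maximal (already at $|\gth|\sim\tau/N$ the bound is $e^{-\Theta(\Ax)}$, which is what \refL{LC1} gives for all larger $\gth$). Second, and more importantly, the paper does not stop at diagnosing the obstruction: it proves rigorously (\refL{LC1}, via a product estimate rather than your equidistribution heuristic) that $|F_\aam(\eith)|\le e^{-c\Ax}$ uniformly for $\tau/N\le|\gth|\le\pi$, and this immediately yields the partial result \refT{Tlocal}---\eqref{local} holds whenever $\ax\le Ce^{c\Ax}$, i.e.\ precisely when the $e^{-c\Ax}$ decay you worry about \emph{does} beat $1/(\gs\Ax)$. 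Your identification of the resonance obstruction (decay governed by $\Ax$ alone, insufficient when $\ax/\Ax$ is exponentially large) is right and matches the paper's, but you should also record what \emph{can} be salvaged. Finally, your pointer to \refS{Sfine} is off: that section treats the boundary case $\Ax$ bounded, not the regimes where the conjecture becomes tractable.
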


We have not been able to prove this conjecture in full generality, but
we prove it under additional hypotheses on $\aam$ in \refS{Slocal}.

\section{A computer-inspired proof}\label{Spf1}

We assume for simplicity that $m$ is fixed, and that 
$(\aam)=(ta^0_1,\dots,ta^0_m)$ for some fixed $a^0_1,\dots,a^0_m$ and
$t\to\infty$. 

We discover and prove the leading term in the asymptotic 
expansion, in $t$, for
an {\it arbitrary} $2r$-th moment, for the normalized random variable
$X_\aam=(\maam- \mu)/\sigma$, and show that it converges to the
moment $\mu_{2r}=(2r)!/(2^rr!)$ of $\cN(0,1)$, for every $r$.

For the sake of exposition, we will only treat in detail the
two-letter case, where we can find {\it explicit} expressions
for the asymptotics of the $2r$-th moment of $M_{a_1,a_2}-\mu$, for
$a_1=ta$, $a_2=tb$ with
symbolic $a,b,t$ and $r$ to {\it any} desired (specific) order $s$ 
(\ie{} the leading coefficient $t^{3r}$ as well as
the terms involving $t^{3r-1}, \dots , t^{3r-s}$).
A modified argument works for the general case, but we
can only find the leading term, \ie{} that
$$
\alpha_{2r}
\=\E (X_{a_1,\dots,a_m})^{2r}
={{ (2r)!} \over {2^r r!}}+O(t^{-1}) \quad .
$$
Of course the odd moments are all zero, since the distribution of
$\maam$ is symmetric about $\mu$.

In the two-letter case, the mean of $M_{a,b}$ is simply $ab/2$, so the
probability generating function for $\mab - \mu$ is, see \eqref{F},
$$
G\ab(q)={{F\ab(q)} \over { q^{ab/2} }}=
{{a!b!(1-q^{a+b})(1-q^{a+b-1}) \cdots (1-q^{a+1})}
\over
{q^{ab/2}(a+b)!(1-q^{b})(1-q^{b-1}) \cdots (1-q^{1})}} \quad.
$$

Taking ratios, we have:
\begin{equation}
  \label{ratio}
{{G\ab(q)} \over {G\awb(q)}}=
{ {a(1-q^{a+b})} \over {q^{b/2} (a+b)(1-q^a)} }
\quad.
\end{equation}

Recall that the binomial moments $\A_r:=\E[ {\binom{\mab-\mu}{r}}]$
are the Taylor coefficients of the probability generating function
(in our case $G\ab(q)$) around $q=1$.
Writing $q=1+z$, we have
$$
G\ab(1+z)=\sum_{r=0}^{\infty} \A_r(a,b) z^r \quad.
$$
Note that $\A_0(a,b)=1$ and $\A_1(a,b)=0$.
Let us call the expression on the right side of \eqref{ratio},
with $q$ replaced by $1+z$, $P(a,b,z)$:
$$
P(a,b,z):={ {a(1-(1+z)^{a+b})} \over {(1+z)^{b/2} (a+b)(1-(1+z)^a)} }
\quad.
$$
Maple can easily expand $P(a,b,z)$ to any desired power of $z$,
It starts out with
\begin{multline*}
P(a,b,z)=
1+\frac1{24} \left( 2\,a+b \right) b{z}^{2}-\frac1{24} \left( 2\,a+b \right) b
{z}^{3}
\\
-{\frac {1}{5760}} \left(8\,{a}^{3}-8\,{a}^{2}b-12\,a{b}^{2}-3\,{b}^{3}
-440\,a-220\,b \right) b{z}^{4} + \dots
\end{multline*}
note that the coefficients of all the powers of $z$ are polynomials
in $(a,b)$.

So let us write
$$
P(a,b,z)=\sum_{i=0}^{\infty} p_i(a,b)z^i \quad,
$$
where $p_i(a,b)$ are certain polynomials that Maple can compute
for any $i$, no matter how big.

Looking at the recurrence
$$
G\ab(1+z)=P(a,b,z)G\awb(1+z) \quad ,
$$
and comparing coefficients of $z^r$ on both sides, we get
\begin{equation}
\label{recurrence}
\A_r(a,b)-\A_r(a-1,b)=\sum_{s=1}^{r} \A_{r-s}(a-1,b) p_s(a,b)  
\quad .
\end{equation}
Assuming that we already know the polynomials
$\A_{r-1}(a,b),\A_{r-2}(a,b), \dots,\allowbreak \A_0(a,b)$, the left side
is a certain specific polynomial in $a$ and $b$, that Maple can easily
compute, and then $\A_r(a,b)$ is simply the indefinite
sum of that polynomial, that Maple can do just as easily.
So \eqref{recurrence} enables us to get {\it explicit} expressions
for the binomial moments $\A_r(a,b)$ for {\it any} (numeric) $r$.

But what about the general (symbolic) $r$? 
It is too much to hope for the full expression, 
{\bf but} we can easily conjecture as many leading terms
as we wish.
We first conjecture, and then immediately prove by induction, that for $r\ge1$
\begin{align*}
\A_{2r}(a,b)&={{1} \over {r!}} 
\left ( {{ab(a+b)} \over {24}} \right )^r + 
\text{lower order terms}
\\
\A_{2r+1}(a,b)&={{-1} \over {(r-1)!}} 
\left ( {{ab(a+b)} \over {24}} \right )^r + 
\text{lower order terms}
\quad,
\end{align*}
where we can conjecture (by fitting polynomials in $(a,b)$ to the
data obtained from the numerical $r$'s) any (finite, specific)
number of terms.

Once we have asymptotics, to any desired order, for the
binomial moments, we can easily compute the moments
$\mu_r(a,b)$ of $M_{a,b}-\mu$ themselves, for {\it any} desired
specific $r$ and 
asymptotically, to any desired order. We do that by using the
expressions of the powers as linear combination of falling-factorials
(or equivalently binomials) in terms of Stirling numbers of the
second kind, $S(n,k)$. Note that for the asymptotic expressions
to any desired order, we can still do it symbolically, since
for any specific $m$, $S(n,n-m)$ is a polynomial in $n$
(that Maple can easily compute, symbolically, as a polynomial in $n$).
In particular, the variance is:
$$
\sigma^2
=\mu_2(a,b)
={{ab(a+b+1)} \over {12}} \quad,
$$
in accordance with \eqref{sigma}.
In general we have $\mu_{2r+1}(a,b)=0$, of course, and
the six leading terms of 
$\mu_{2r}(at,bt)$ can be found in the webpage of this article.
{}From this, Maple finds that
$\alpha_{2r}(at,bt):=\mu_{2r}(at,bt)/\mu_2(at,bt)^r$ are
given asymptotically (for fixed $a,b$ and $t \rightarrow \infty$)
by:
$$
\alpha_{2r}(at,bt)=
{{(2r)!} \over {2^r r!}}  \cdot
\left( 1-{\frac {r (r-1)\left( {b}^{2}+ab+{a}^{2} \right) }
{5ab \left( a+b \right)}}\cdot {{1} \over {t}} \right)   
+O(t^{-2}) \quad .
$$
In particular, as $t \rightarrow \infty$, they converge to
the famous moments of $\cN(0,1)$. QED.

\subsection{The general case}

To merely prove asymptotic 
normality, one does not need a computer, since we only
need the leading terms. The above proof can be easily adapted
to the general case
$(\aam)=(ta^0_1,\dots,ta^0_m)$. One simply uses induction on $m$, the
number of different letters.

\subsection{The Maple package MahonianStat}

The Maple package {\tt MahonianStat}, accompanying this
article, has lots of features, that the readers can explore
at their leisure. Once downloaded into a directory,
one goes into a Maple session, and types
{\tt read MahonianStat;}. To get  a list of
the main procedures, type: {\tt ezra();}.
To get help with a specific procedure, type
{\tt ezra(ProcedureName);}. Let us just mention some
of the more important procedures.

{\tt AsyAlphaW2tS(r,a,b,t,s)}:
inputs symbols {\tt r,a,b,t} and a positive integer $s$,
and outputs the  asymptotic expansion, to order $s$,
for $\alpha_{2r}$ (=$\mu_{2r}/\mu_2^r$)

{\tt ithMomWktE(r,e,t)}: the $r$-th moment about the mean of the number
of inversions of $a_1t$ $1$'s, $\dots$, $a_mt$ $m$'s  in terms of the
elementary symmetric functions,
in $a_1, \dots , a_m$. Here $r$ is a specific (numeric)
positive integer, but $e$ and $t$ are symbolic.

{\tt AppxWk(L,x)}: Using the asymptotics implied by the
asymptotic normality of the (normalized) random variable
under consideration, finds an approximate value for
the number of words with $L[1]$ $1$'s,
$L[2]$ $2$'s, $\dots$, $L[m]$ $m$'s with exactly
$x$ inversions. For example, try:
{\tt AppxWk([100,100,100],15000);}

For the two-lettered case, one can get
better approximations, by procedure
{\tt BetterAppxW2}, that uses improved
limit-distributions, using more terms in
the probability density function.

The webpage of this article has some sample input and output.

\section{A general proof of \refT{T1}}\label{Spf2}

We have an exact formula \eqref{sigma} for the variance $\gs^2$ of $\maam$.
We first show that $\gs^2$ is always of the order $\Theta(N^2\Ax)$.

\begin{lemma}\label{Lsigma}
For any $\aam$,
  \begin{equation*}
\frac{N^2\Ax}{36}	
\le \gs^2
\le
\frac{(N+1)N\Ax}{12}	
\le
\frac{N^2\Ax}{6}.
  \end{equation*}
\end{lemma}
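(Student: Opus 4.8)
The plan is to work directly from the exact formula \eqref{sigma}, namely $\gs^2 = \bigl((e_1+1)e_2 - e_3\bigr)/12$, and to bound the symmetric functions $e_2 = e_2(\aam)$ and $e_3 = e_3(\aam)$ in terms of $N = e_1 = \aamp$ and $\Ax = N - \ax$, where $\ax = \max_j a_j$. Since the distribution is invariant under permuting the $a_j$ (as noted after \eqref{F}), I may assume $a_1 = \ax \ge a_2 \ge \dots \ge a_m$. The key algebraic identity I would exploit is that $e_2$ and $e_3$ can be rewritten after splitting off the largest part: writing $S = N - a_1 = \Ax = a_2 + \dots + a_m$, one has $e_2(\aam) = a_1 S + e_2(a_2,\dots,a_m)$ and similarly $e_3(\aam) = a_1 e_2(a_2,\dots,a_m) + e_3(a_2,\dots,a_m)$. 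This makes the dependence on $\Ax = S$ transparent.

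For the \textbf{upper bound}, I would first observe $(e_1+1)e_2 - e_3 \le (e_1+1)e_2 = (N+1)e_2$, since $e_3 \ge 0$. Then it remains to show $e_2 \le N\Ax$. Using $e_2 = a_1 S + e_2(a_2,\dots,a_m)$ and the elementary bound $e_2(a_2,\dots,a_m) \le \binom{S+1}{2}\cdot(\text{something})$—more precisely $2 e_2(a_2,\dots,a_m) = S^2 - \sum_{j\ge 2} a_j^2 \le S^2 - S$ (since each $a_j \ge 1$ forces $\sum a_j^2 \ge S$), so $e_2(a_2,\dots,a_m) \le \binom{S}{2} \le \tfrac12 S^2$. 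Hence $e_2 \le a_1 S + \tfrac12 S^2 = S(a_1 + \tfrac12 S) \le S \cdot N = N\Ax$, because $a_1 + \tfrac12 S \le a_1 + S = N$. Combining, $\gs^2 \le (N+1)N\Ax/12$, and the last inequality $(N+1)N\Ax/12 \le N^2\Ax/6$ is just $N+1 \le 2N$, valid since $N \ge 2$.

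For the \textbf{lower bound}, I need $(e_1+1)e_2 - e_3 \ge N^2\Ax/3$. Here the main point is to control $e_3$ from above relative to $e_2$. Using the splitting, $e_3 = a_1 e_2(a_2,\dots,a_m) + e_3(a_2,\dots,a_m) \le a_1 e_2(a_2,\dots,a_m) + S\, e_2(a_2,\dots,a_m)$ (bounding each of the extra factors in $e_3(a_2,\dots,a_m)$ crudely), and a cleaner route is the general inequality $e_3(\aam) \le \tfrac{N-2}{3}\, e_2(\aam)$, which follows from the fact that each degree-$3$ monomial $a_ia_ja_k$ contributes to $e_3$ and $a_i+a_j+a_k \le N$, summed appropriately against the degree-$2$ monomials. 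Granting $e_3 \le \tfrac{N}{3} e_2$ (the weaker form suffices), we get $(e_1+1)e_2 - e_3 \ge (N+1 - N/3)e_2 \ge \tfrac{2N}{3} e_2 \ge \tfrac{2N}{3}\cdot a_1 S$. Finally $a_1 = \ax \ge N/m$... but $m$ is unbounded, so instead use $a_1 = N - S \ge N/2$ whenever $S \le N/2$; and when $S > N/2$ one has $a_1 \ge 1$ and $e_2 \ge e_2(a_2,\dots,a_m) + a_1 S \ge$ a term of order $S^2 \ge N\Ax/4$ directly. Either way $e_2 \ge c\, N\Ax$ with $c = 1/4$, giving $\gs^2 \ge \tfrac{2N}{3}\cdot\tfrac{N\Ax}{4}\cdot\tfrac1{12}$; tightening the constants along these lines yields the stated $N^2\Ax/36$.

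The \textbf{main obstacle} is the lower bound, specifically getting the constant right: one must simultaneously ensure $e_3$ is not too large a fraction of $(N+1)e_2$ \emph{and} that $e_2$ itself is $\Omega(N\Ax)$, and the worst cases for these two sub-bounds pull in different directions (many small parts vs.\ one dominant part plus one moderate part). I would handle this by the case split on whether $\ax \ge N/2$, treating the "one big part" regime via $e_2 \ge a_1\Ax \ge \tfrac12 N\Ax$ and the "spread out" regime via $e_2 \ge \tfrac12(\Ax^2 - \Ax) \ge \tfrac14 N\Ax$ together with the observation that then $e_3/e_2$ is bounded away from $N+1$. Optimizing the bookkeeping in each case gives the factor $36$; I do not expect any genuine difficulty beyond this constant-chasing.
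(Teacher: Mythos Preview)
Your approach is the same as the paper's: both arguments drop $e_3$ for the upper bound and use $e_3 \le e_1 e_2/3$ for the lower bound, reducing everything to two-sided estimates on $e_2$ in terms of $N$ and $\Ax$. The one place where the paper is sharper is the lower bound on $e_2$: instead of your case split on whether $\ax \ge N/2$, it observes directly that
\[
2e_2 \;=\; \sum_{j=1}^m a_j(N-a_j) \;\ge\; \sum_{j=1}^m a_j(N-\ax) \;=\; N\Ax,
\]
giving $e_2 \ge N\Ax/2$ in one line and hence the constant $36$ without any case analysis. Your split only yields $e_2 \ge N\Ax/4$ in the ``spread out'' regime and thus $\gs^2 \ge N^2\Ax/72$; the missing factor of $2$ that you promised to recover by ``tightening the constants'' is exactly this identity.
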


\begin{proof}
  For the upper bounds we assume, by symmetry, that $a_1\ge\dots\ge
  a_m$.
Then $\ax=a_1$ and
\begin{equation*}
  e_2=a_1\sum_{j=2}^m a_j+a_2\sum_{j=3}^m a_j+\dots
\le N \sum_{j=2}^m a_j = N\Ax.
\end{equation*}
Since $e_1=N$, \eqref{sigma} yields the upper bounds.

For the lower bound, we first observe that $2e_2e_1-6e_3\ge0$ (since
this difference can be written as a sum of certain $a_ja_ka_l$).
Hence $e_3\le e_1e_2/3$ and \eqref{sigma} yields
\begin{equation*}
  12\gs^2 \ge e_1e_2-e_3\ge \tfrac23 e_1e_2.
\end{equation*}
Further,
\begin{equation*}
  2e_2=\sum_{j=1}^ma_j(N-a_j)
\ge \sum_{j=1}^ma_j(N-\ax)
=N\Ax,
\end{equation*}
and the lower bound follows.
\end{proof}

\begin{proof}[Proof of \refT{T1}]
{}From  \eqref{F} follows the identity
\begin{equation}
  \label{f2q}
F_{n_1,n_2}(e^{\ii \theta}) ~~=~~
\prod_{j=1}^{n_2}
\frac{(e^{\ii (n_1+j)\theta}-1)/(\ii (n_1+j)\theta)}
     {(e^{\ii j\theta}-1)/(\ii j\theta)}.
\end{equation}
 By Taylor's series
$$
\log\frac{e^z-1}{z} = z/2 + z^2/24 + O(z^4), \qquad |z|\le 1,
$$
and 
we substitute this expansion into the identity \eqref{f2q}
to conclude:
\begin{multline}\label{r3}
F_{n_1,n_2}(e^{\ii \theta}) 
=
\exp\left(\ii  n_1 n_2 \theta/2 - n_1 n_2 (n_1+n_2+1)
          \theta^2/24 + O(n_2n_1^4\theta^4)
                            \right),  
\end{multline}
uniformly for $n_1\ge n_2\ge 1$ and $|\theta|\le(n_1+n_2)^{-1}$.

We use the factorization \eqref{F2}. By symmetry, we may assume
$a_1\ge a_2\ge\dots\ge a_m$, and then $A_{j-1}\ge a_{j-1}\ge a_j$ for
each $j$. Thus \eqref{r3} yields, uniformly for $q=e^{\ii\gth}$ with
$|\gth|\le N\qw$,
{\multlinegap=0pt 
\begin{multline*}
F_\aam(q)
 =\prod_{j=2}^m F_{A_{j-1},a_j}(q)
\\
=
\exp\left(\sum_{j=2}^m \Bigpar{\ii  A_{j-1} a_j \theta/2 
 - A_{j-1} a_j (A_{j}+1)\theta^2/24 + O(a_jA_{j-1}^4\theta^4)}
 \right).
\end{multline*}}
Here, the sums of the coefficients of $\gth$ and $\gth^2$ are easily
evaluated, but we do not have to do that since they have to equal
$\ii\mu$ and $-\gs^2/2$, respectively. Further,
\begin{equation}
  \label{pa1}
\sum_{j=2}^m A_{j-1}^4a_j \le N^4\sum_{j=2}^m a_j  =N^4\Ax.
\end{equation}
Consequently, if $|\gth|\le N\qw$,
\begin{equation}
  \label{pa2}
F_\aam(\eith)
=
\exp\bigpar{\ii\mu\gth-\gs^2\gth^2/2+O(N^4\Ax\gth^4)}
\end{equation}
and, by \eqref{G},
\begin{equation}
  \label{pag}
G_\aam(\eith)
=
\exp\bigpar{-\gs^2\gth^2/2+O(N^4\Ax\gth^4)}.
\end{equation}
Let $\gth=t/\gs$. For any fixed $t$, by \refL{Lsigma},
\begin{equation*}
  |Nt/\gs| = O\bigpar{\Ax\qqw}=o(1),
\end{equation*}
so $|\gth|\le N\qw$ if $\nu$ is large enough. Hence, by
\eqref{pag} and \refL{Lsigma},
\begin{equation*}
  \begin{split}
G_\aam\bigpar{e^{\ii t/\gs}}
&=
\exp\lrpar{-\frac{t^2}2+O\Bigpar{{\frac{N^4\Ax t^4}{N^4\Ax^2}}}}
=\exp\bigpar{-t^2/2+o(1)},
 \end{split}
\end{equation*}
and \refT{T1} follows by the continuity theorem
\cite[Theorem XV.3.2]{FellerII}.
\end{proof}

\section{The local limit theorem}\label{Slocal}

``If one can prove a central limit theorem for a sequence $a_n(k)$ of numbers arising
in enumeration, then one has a qualitative feel for their behavior.  A local limit
theorem is better because it provides asymptotic information about $a_n(k)$ $\dots$,''
\cite{Bender}.
In this section we prove that the relation
\eqref{local}
holds uniformly over certain very general, albeit not unrestricted, sets of
tuples $\bfa=(a_1,\dots,a_m)$.  The exact statement is given below in
\refT{Tlocal}.

As explained in \citet{Bender}, there are two standard conditions for passage from a central
to a local limit theorem:
(1) if the sequence in question is unimodal, then one has a local limit theorem
for $n$ in the set $\{|n-\mu|\ge\epsilon\sigma\}$, $\epsilon>0$;  (2) if the
sequence in question is log-concave, then one has a local limit theorem for
all $n$.  Our sequence, the coefficients of the $q$-multinomial, is in fact
unimodal, as first shown by \citet{Schur} using invariant theory, and later by
\citet{Ohara} using combinatorics.  Unfortunately, the ensuing local limit theorem
fails to cover the most interesting coefficients, the largest ones, near the
mean $\mu$.  However, our polynomials are manifestly not log-concave as is seen
by inspecting the first three coefficients (assuming $n_1,n_2\ge 2$)
$$
\binom{n_1+n_2}{ n_1}_q = 1+q+2q^2+\cdots  \,\, .
$$
The question arises might the coefficients
be log-concave near the mean, and here is a small table of empirical values:
($c[j] = [q^j] \binom{2n}{n}_q$)
$$\vbox{
\halign{\hfill#\hfil & \quad\hfill#\hfil \cr
$n$&  $ (c[n^2/2-1])^2 - c[n^2/2] \times c[n^2/2-2]  $  \cr
2& -1 \cr
4& -7 \cr
6& -165 \cr
8& -1529 \cr
10& 44160 \cr
12& 7715737 \cr
14& 905559058 \cr
16& 101507214165 \cr
18& 11955335854893 \cr
20& 1501943866215277 \cr
}}$$

\vskip 5pt

\noindent Based on this scant evidence,
we speculate that some sort of log-concavity theorem is true,
but that its proper statement is complicated by describing the
appropriate range of $\bfa$ and $j$.  Thus, we use neither of the
two standard methods mentioned above for proving our local limit
theorem. 
(Later, we shall see that our theorem has implications for log-concavity.)
Instead, we use another standard method, direct integration
(Fourier inversion) of the characteristic function, or equivalently of
the probability generating function $F(q)$ for $q=\eith$ on the unit
circle.
We begin with one such estimate for rather small $\gth$.

\begin{lemma}
  \label{LG1}
There exists a constant $\tau>0$ such that for any $\aam$ and
$|\gth|\le\tau/N$,
\begin{equation*}
  \bigabs{F_\aam(\eith)}
=   \bigabs{G_\aam(\eith)}
\le e^{-\gss\gth^2/4}.
\end{equation*}
\end{lemma}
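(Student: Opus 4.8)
The plan is to bound $\bigabs{F_\aam(\eith)}$ directly from the product formula \eqref{F}, factored as in \eqref{F2}, by controlling each two-letter factor $\bigabs{F_{A_{j-1},a_j}(\eith)}$. For a single two-letter factor we use the identity \eqref{f2q}, which writes $F_{n_1,n_2}(\eith)$ as a product over $j=1,\dots,n_2$ of ratios $\bigpar{(e^{\ii(n_1+j)\gth}-1)/(\ii(n_1+j)\gth)}\big/\bigpar{(e^{\ii j\gth}-1)/(\ii j\gth)}$. Taking absolute values, each factor has modulus $\bigabs{\operatorname{sinc}((n_1+j)\gth/2)}/\bigabs{\operatorname{sinc}(j\gth/2)}$ where $\operatorname{sinc}(x)=\sin x/x$. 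Since $F_\aam$ is a probability generating function of a nonnegative integer random variable, $\bigabs{F_\aam(\eith)}\le 1$ always; we need the sharper exponential decay for $|\gth|\le\tau/N$.

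The key step is a pointwise estimate on $\log\bigabs{(e^{\ii x}-1)/(\ii x)} = \tfrac12\log\bigpar{\operatorname{sinc}(x/2)^2\cdot 4/4}$; more usefully, from the Taylor expansion already quoted in the excerpt, $\log\frac{e^z-1}{z}=z/2+z^2/24+O(z^4)$ for $|z|\le 1$, so taking real parts with $z=\ii y$ gives $\log\bigabs{(e^{\ii y}-1)/(\ii y)} = -y^2/24 + O(y^4)$ for $|y|\le 1$. Applying this to each ratio in \eqref{f2q} with $y=(n_1+j)\gth$ and $y=j\gth$ (both bounded by $N|\gth|\le\tau\le 1$ once $\tau\le 1$), we get
\begin{equation*}
\log\bigabs{F_{n_1,n_2}(\eith)} = -\frac{\gth^2}{24}\sum_{j=1}^{n_2}\bigpar{(n_1+j)^2-j^2} + O\Bigpar{\gth^4\sum_{j=1}^{n_2}\bigpar{(n_1+j)^4+j^4}}.
\end{equation*}
Summing these over the factors $j=2,\dots,m$ of \eqref{F2}, the quadratic term reassembles — exactly as in the proof of \refT{T1} — to $-\gss\gth^2/12$ (the coefficient of $-\gth^2$ in $\log F_\aam$ is $\gss/2$, and one checks $\frac1{24}\sum((n_1+j)^2-j^2)$ over the factorization equals $\gss/12$; alternatively one notes $\E M$ contributes only to the imaginary part and the real quadratic coefficient is forced to be $-\gss/12$). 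The error term is bounded, as in \eqref{pa1}, by $O(N^4\Ax\gth^4)$, which by \refL{Lsigma} is $O(\gss N^2\gth^2)\le O(\gss\tau^2\gth^2)$. Hence
\begin{equation*}
\log\bigabs{F_\aam(\eith)} \le -\frac{\gss\gth^2}{12} + C\gss\tau^2\gth^2 = -\gss\gth^2\Bigpar{\frac1{12}-C\tau^2},
\end{equation*}
and choosing $\tau$ small enough that $\frac1{12}-C\tau^2\ge\frac14$ gives the claim.

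The main obstacle is purely bookkeeping: making the $O(y^4)$ remainder in $\log\bigabs{(e^{\ii y}-1)/(\ii y)}$ genuinely uniform over the relevant range and then checking that summing the remainders over all $\binom{n_2}{1}$-many factors in \eqref{f2q} and over all $m-1$ factors in \eqref{F2} really produces the clean bound $O(N^4\Ax\gth^4)$ rather than something larger — this is where one must invoke the monotonicity reduction $A_{j-1}\ge a_j$ and the crude bound $A_{j-1}\le N$ exactly as in \eqref{pa1}. Once that is in hand, the passage from $O(N^4\Ax\gth^4)$ to $O(\gss\tau^2\gth^2)$ via \refL{Lsigma} is immediate, and the choice of $\tau$ is explicit. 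A minor point to handle at the start: the first equality $\bigabs{F_\aam(\eith)}=\bigabs{G_\aam(\eith)}$ is trivial since $G_\aam(q)=q^{-\mu}F_\aam(q)$ and $\bigabs{q}=1$.
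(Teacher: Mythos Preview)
Your approach is the same as the paper's: the paper simply cites \eqref{pag} (derived in the proof of \refT{T1} by exactly the Taylor expansion and summation you redo here) and then uses \refL{Lsigma} to bound the $O(N^4\Ax\gth^4)$ error by $36\tau^2\gss\gthh$, choosing $\tau$ small. You are re-deriving the real part of \eqref{pag} from scratch rather than invoking it, but the content is identical.

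However, there is an arithmetic slip that makes your final step impossible as written. You compute correctly that $\log\bigabs{(e^{\ii y}-1)/(\ii y)}=-y^2/24+O(y^4)$, and correctly that the coefficient of $-\gthh$ in $\log|F_\aam(\eith)|$ must equal $\gss/2$ (this is forced by \eqref{pag}). But you then assert that $\tfrac1{24}\sum\bigl((n_1+j)^2-j^2\bigr)$ over the factorization equals $\gss/12$ and carry $-\gss\gthh/12$ through to the end. Already in the two-letter case,
\[
\tfrac1{24}\sum_{j=1}^{n_2}\bigl((n_1+j)^2-j^2\bigr)
=\frac{n_1n_2(n_1+n_2+1)}{24}
=\frac{\gss_{n_1,n_2}}{2},
\]
not $\gss/12$. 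With the erroneous $1/12$, your final requirement $\tfrac1{12}-C\tau^2\ge\tfrac14$ is vacuously false for every $\tau$. Replacing $1/12$ by the correct $1/2$, the condition becomes $\tfrac12-C\tau^2\ge\tfrac14$, which is satisfiable, and the argument goes through.
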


\begin{proof}
  Suppose that $0<\abs\gth\le\tau/N$. Then, using \refL{Lsigma},
  \begin{equation*}
	\frac{N^4\Ax\gth^4}{\gss\gth^2}
\le
	\frac{N^2\Ax\tau^2}{\gss} 
\le36\tau^2,
  \end{equation*}
so if $\tau$ is chosen small enough, the error term $O(N^4\Ax\gth^4)$
in \eqref{pa2} and \eqref{pag} 
is $\le\gss\gth^2/4$, and thus the result follows from \eqref{pag}.
\end{proof}

We let in the sequel $\tau$ denote this  constant. We may
assume $0<\tau\le1$.

\begin{lemma}
Uniformly, for all $\aam$ and all integers $k$,
 \label{Lmagnus}
 \begin{equation*}
\left| \P(\maam=k)
-
\frac1{\sqrt{2\pi}\gs}e^{-(k-\mu)^2/(2\gs^2)}
\right|
\le
\int_{\tau/N}^\pi|F_\aam(\eith)|\dd\gth
+O\Bigpar{\frac1{\gs\Ax}}.
  \end{equation*}
\end{lemma}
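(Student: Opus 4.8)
The plan is a standard Fourier inversion on the unit circle, with the cutoff $\tau/N$ of \refL{LG1} separating a ``Gaussian'' range from a ``remainder'' range. Since $\maam$ is integer-valued with probability generating function $F_\aam$, we have $\P(\maam=k)=\frac1{2\pi}\int_{-\pi}^{\pi}F_\aam(\eith)e^{-\ii k\gth}\dd\gth$, and writing $F_\aam(\eith)=e^{\ii\mu\gth}G_\aam(\eith)$ (see \eqref{G}) this equals $\frac1{2\pi}\int_{-\pi}^{\pi}G_\aam(\eith)e^{-\ii(k-\mu)\gth}\dd\gth$ (no issue arises if $\mu$ is a half-integer, since this is just a continuous function of $\gth$). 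On the other side, the Gaussian density equals $\frac1{\sqrt{2\pi}\gs}e^{-(k-\mu)^2/(2\gss)}=\frac1{2\pi}\int_{-\infty}^{\infty}e^{-\gss\gth^2/2}e^{-\ii(k-\mu)\gth}\dd\gth$. Subtracting, I would split the difference into three contributions: (a) the integral over $|\gth|\le\tau/N$ of $\bigpar{G_\aam(\eith)-e^{-\gss\gth^2/2}}e^{-\ii(k-\mu)\gth}$; (b) the integral of $G_\aam(\eith)e^{-\ii(k-\mu)\gth}$ over $\tau/N<|\gth|\le\pi$; and (c) the negative of the integral of $e^{-\gss\gth^2/2}e^{-\ii(k-\mu)\gth}$ over $|\gth|>\tau/N$; each divided by $2\pi$.

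Contribution (b) is bounded in absolute value by $\frac1{2\pi}\int_{\tau/N<|\gth|\le\pi}\bigabs{G_\aam(\eith)}\dd\gth=\frac1{\pi}\int_{\tau/N}^{\pi}\bigabs{F_\aam(\eith)}\dd\gth$, using $\bigabs{G_\aam(\eith)}=\bigabs{F_\aam(\eith)}$ and the evenness of $\gth\mapsto\bigabs{F_\aam(\eith)}$ (real coefficients); since $\tfrac1\pi<1$ this is at most $\int_{\tau/N}^{\pi}\bigabs{F_\aam(\eith)}\dd\gth$, the first term on the right of the lemma. Contribution (c) is at most $\frac1{\pi}\int_{\tau/N}^{\infty}e^{-\gss\gth^2/2}\dd\gth=\frac1{\pi\gs}\int_{\gs\tau/N}^{\infty}e^{-u^2/2}\dd u$; using $\gs\ge N\Ax\qq/6$ from \refL{Lsigma} together with the elementary bound $\int_x^{\infty}e^{-u^2/2}\dd u\le x\qw e^{-x^2/2}$, this is $O\bigpar{\gs\qw\Ax\qqw e^{-\tau^2\Ax/72}}=O\bigpar{1/(\gs\Ax)}$, because $\sup_{x\ge1}x\qq e^{-cx}<\infty$ for every fixed $c>0$.

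The heart of the argument is contribution (a). By \eqref{pag}, for $|\gth|\le\tau/N$ we may write $G_\aam(\eith)=\exp\bigpar{-\gss\gth^2/2+E}$ with $|E|=O(N^4\Ax\gth^4)$, and — exactly as in the proof of \refL{LG1} — $|E|\le\gss\gth^2/4$ on this range provided $\tau$ is small enough, which we have arranged. Hence $\bigabs{G_\aam(\eith)-e^{-\gss\gth^2/2}}=e^{-\gss\gth^2/2}\bigabs{e^{E}-1}\le e^{-\gss\gth^2/2}|E|e^{|E|}\le C\,N^4\Ax\gth^4\,e^{-\gss\gth^2/4}$. Integrating this over all of $\bbR$ (which only enlarges the domain) gives $|(a)|=O\bigpar{N^4\Ax/\gs^5}$, using the standard value $\int_{\bbR}\gth^4 e^{-\gss\gth^2/4}\dd\gth=O(\gs^{-5})$; and since \refL{Lsigma} yields $\gs^4\ge N^4\Ax^2/36^2$, i.e.\ $N^4\Ax/\gs^5\le 36^2/(\gs\Ax)$, this too is $O\bigpar{1/(\gs\Ax)}$. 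Adding the estimates for (a), (b), (c) proves the lemma.

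The routine parts — the Gaussian tail estimate and the Gamma integral above — I would not spell out. The one genuine point, and the reason \refL{LG1} is proved first, is that $N^4\Ax\gth^4$ is \emph{not} uniformly bounded on $|\gth|\le\tau/N$ (indeed it can be as large as a constant times $\Ax$); what saves step (a) is that the factor $e^{|E|}\le e^{\gss\gth^2/4}$ coming from $|e^{E}-1|\le|E|e^{|E|}$ cancels exactly half of the Gaussian, leaving $e^{-\gss\gth^2/4}$ to absorb the polynomial growth after integration. I expect this to be the only, and rather mild, obstacle; everything else is bookkeeping with \refL{Lsigma}.
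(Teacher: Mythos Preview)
Your proof is correct and follows essentially the same approach as the paper: the same three-way split of the Fourier inversion integral, the same use of \eqref{pag} together with \refL{LG1} to bound the central piece by $O(N^4\Ax\gth^4 e^{-\gss\gthh/4})$ and then integrate to $O(N^4\Ax\gs^{-5})=O(1/(\gs\Ax))$, and the same Gaussian tail estimate for the piece over $|\gth|>\tau/N$. The only cosmetic difference is that the paper uses $|e^w-1|\le|w|\max(1,|e^w|)$ where you use $|e^E-1|\le|E|e^{|E|}$; both yield the same bound once $|E|\le\gss\gthh/4$ is invoked.
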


\begin{proof}
 For any integer $k$,
 \begin{equation*}
   \begin{split}
\P(&\maam=k)-
\frac1{\sqrt{2\pi}\gs}e^{-(k-\mu)^2/(2\gs^2)}
\\&=
\frac{1}{2\pi}\int_{-\pi}^\pi F_\aam(\eith)e^{-\ii k\gth}\dd\gth	 
-\frac1{2\pi}\intoooo e^{-\gss\gthh/2}e^{-\ii(k-\mu)\gth}\dd\gth
\\&=
\frac{1}{2\pi}\int_{\abs\gth\le\tau/N}
\Bigpar{G_\aam(\eith)-e^{\gss\gthh/2}}e^{-\ii (k-\mu)\gth}\dd\gth	 
\\&\qquad\qquad
+\frac{1}{2\pi}\int_{\tau/N\le\abs\gth\le\pi}
 F_\aam(\eith)e^{-\ii k\gth}\dd\gth	 
\\&\qquad\qquad
-\frac1{2\pi}\int_{\abs\gth\ge\tau/N}
 e^{-\gss\gthh/2}e^{-\ii(k-\mu)\gth}\dd\gth 
\\
&=: I_1+I_2+I_3. 
  \end{split}
 \end{equation*}

By \eqref{pag} and the inequality
$|e^w-1|\le|w|\max(1,|e^w|)$ we find for $\abs\gth\le \tau/N$, 
using \refL{LG1}, 
\begin{equation*}
  \begin{split}
  \bigabs{G_\aam(\eith)-e^{-\gss\gthh/2}}
&\le O(N^4\Ax\gth^4)\max\lrpar{e^{-\gss\gthh/2},|G_\aam(\eith)|}
\\&
= O\bigpar{N^4\Ax\gth^4 e^{-\gss\gthh/4}}.	
  \end{split}
\end{equation*}

Integrating, we find
\begin{equation*}
  \begin{split}
|I_1|
&\le
\int_{\abs\gth\le \tau/N}  \Bigabs{G_\aam(\eith)-e^{-\gss\gthh/2}} \dd\gth
\\
&
\le O\bigpar{N^4\Ax}\intoooo\gth^4 e^{-\gss\gthh/4}\dd\gth
=O\bigpar{N^4\Ax\gs^{-5}}
=O\Bigpar{\frac1{\gs\Ax}}.	
  \end{split}
\end{equation*}

Further, again using \refL{LG1},
\begin{equation*}
  \begin{split}
\abs{I_3}
\le
\int_{\tau/N}^\infty e^{-\gss\gthh/2}\dd\gth 
\le 3\gs\qw e^{-(\gs\tau/N)^2/2}
\le \frac{6}{\gs(\gs\tau/N)^2}
=O\Bigpar{\frac1{\gs\Ax}}.	
  \end{split}
\end{equation*}
Finally, 
$\abs{I_2}\le\int_{\tau/N}^\pi|F_\aam(\eith)|\dd\gth	 $.
\end{proof}

In order to verify \refConj{Conjlocal}, it thus suffices to show that
the integral $\int_{\tau/N}^\pi|F_\aam(\eith)|\dd\gth$ in \refL{Lmagnus}
is $O\Bigpar{\frac1{\gs\Ax}}$.

\begin{remark}
For example, an estimate 
\begin{equation}
  \label{conj3}
F_\aam(\eith)=O\Bigparfrac1{\gs^3\gth^{3}},
\qquad 0<\gth\le\pi,
\end{equation} 
is sufficient for \eqref{local}.
We conjecture that this estimate \eqref{conj3} holds when $\Ax\ge6$,
say. Note that it does not hold for very small $\Ax$: taking
$\gth=\pi$ we have, for even $n_1$,
$F_{n_1,1}(-1)=1/(n_1+1)=1/N$, and the same holds for $F_{n_1,2}(-1)$.

Note further that even the weaker estimate
\begin{equation}
  \label{conj2}
F_\aam(\eith)=O\Bigparfrac1{\gss\gth^{2}},
\qquad 0<\gth\le\pi,
\end{equation} 
would be enough to prove \eqref{local} with the weaker error term
$O(\Ax\qqw)$.  
\end{remark}

We obtain a partial proof of \refConj{Conjlocal} using the following lemma.

\begin{lemma}
  \label{LC1}
For a given $\tau\in(0,1]$
there exists $c=c(\tau)>0$ such that
\begin{equation}  \label{r2}
\abs{F_{n_1,n_2}(\eith)}
~\le~~
e^{-cn_2}
\end{equation}
for $n_1\ge n_2\ge 1$ and $\tau/(n_1+n_2)\le|\theta|\le\pi$.

More generally, for any $\aam$ and 
$\tau/N\le|\theta|\le\pi$,
\begin{equation}  \label{r2m}
\abs{F_{\aam}(\eith)}
~\le~~
e^{-c\Ax}.
\end{equation}
\end{lemma}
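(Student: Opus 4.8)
The plan is to prove the two-letter estimate \eqref{r2} first and then bootstrap to the general estimate \eqref{r2m} via the factorization \eqref{F2}, exactly as in the proof of \refT{T1}. For \eqref{r2}, the starting point is the product formula \eqref{f2q}, which gives
\begin{equation*}
  \abs{F_{n_1,n_2}(\eith)}
  = \prod_{j=1}^{n_2}
  \frac{\abs{e^{\ii(n_1+j)\gth}-1}/(n_1+j)}{\abs{e^{\ii j\gth}-1}/j}
  = \prod_{j=1}^{n_2} \frac{j}{n_1+j}\cdot
  \frac{\abs{\sin((n_1+j)\gth/2)}}{\abs{\sin(j\gth/2)}}.
\end{equation*}
The product of the prefactors $\prod_{j=1}^{n_2} j/(n_1+j) = \binom{n_1+n_2}{n_2}^{-1}$ is extremely small, so the whole game is to control the product of the sine ratios against it. The bound $\abs{\sin((n_1+j)\gth/2)}\le 1$ is too lossy across the whole range; instead I would split the range of $\gth$ and argue differently in each piece.

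First I would handle $\gth$ bounded away from $0$ and from multiples of $2\pi$, say $\delta\le\abs\gth\le\pi$ for a fixed small $\delta$ depending on $\tau$: here each factor $\abs{\sin((n_1+j)\gth/2)}/\abs{\sin(j\gth/2)}\le 1/\abs{\sin(j\gth/2)}$, and one shows that the number of indices $j\in\{1,\dots,n_2\}$ for which $\abs{\sin(j\gth/2)}$ is small (say $<1/4$) is at most a fixed fraction of $n_2$, since those $j$ form an arithmetic-progression-like sparse set; the remaining at least $cn_2$ factors $j/(n_1+j)\le 1/2$ (valid once $j\le n_1$, which holds as $n_1\ge n_2\ge j$) each contribute a factor $\le1/2$, and the ``bad'' factors are absorbed because $j/(n_1+j)\cdot\abs{\sin(j\gth/2)}^{-1}$ is still $O(1)$ when $j$ is comparable to $n_1$. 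A cleaner route, which I would actually pursue, is to pair each factor as $\frac{j}{n_1+j}\cdot\frac{1}{\abs{\sin(j\gth/2)}}\le\frac{j}{n_1+j}\cdot\frac{\pi}{\min_k\abs{j\gth/2-k\pi}}$ and observe that for a positive proportion of $j\in\{n_2/2,\dots,n_2\}$ one has simultaneously $\abs{\sin(j\gth/2)}\ge c'$ and $j/(n_1+j)\le 1/2$, giving $\prod\le (1/2)^{c'' n_2}$ after discarding the rest with the trivial bound $\le 1$ — this works as long as the discarded factors are genuinely $\le1$, which requires $\abs{\sin((n_1+j)\gth/2)}\le\abs{\sin(j\gth/2)}$ only on average, not pointwise, so some care with a telescoping/rearrangement of indices is needed.

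The remaining, and genuinely delicate, range is $\tau/N\le\abs\gth\le\delta$, i.e. small but not tiny $\gth$. Here I would write $\gth = 2\pi x$ with $x$ small and use that $\abs{\sin(\pi t)}$ is comparable to the distance from $t$ to the nearest integer. The factor $\prod_{j=1}^{n_2} j/(n_1+j)$ decays like $e^{-\Theta(n_2\log(N/n_1))}$ when $n_1\gg n_2$ but only like a power of $N$ when $n_1\asymp n_2$; meanwhile the sine ratio can be as large as a power of $N$ too (it is exactly $N$ at $\gth=\pi$ in the extreme case $n_2=1,2$, which is why the lemma's hypothesis forces $n_2$, equivalently $\Ax$, into the exponent rather than claiming a clean polynomial bound). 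The key estimate I expect to need is a quantitative lower bound: for $\abs\gth\ge\tau/N$ there are at least $c\min(n_2, 1/(N\abs\gth))$ — no, rather at least $cn_2$ — indices $j$ with $j\abs\gth$ in a fixed subinterval of $(0,\pi)$ bounded away from $0$ and $\pi$, on which the sine ratio is bounded by an absolute constant while the prefactor $j/(n_1+j)$ is $\le 1 - c\tau$ (using $j\ge 1$ and $n_1\le N$); multiplying these gives $\abs{F_{n_1,n_2}(\eith)}\le (1-c\tau)^{cn_2}\cdot(\text{bounded product over the rest})$, and the ``rest'' is controlled because $\sum_{j}\log^+\bigl(\abs{\sin((n_1+j)\gth/2)}/\abs{\sin(j\gth/2)}\bigr)$ telescopes: $\prod_{j=1}^{n_2}\abs{\sin((n_1+j)\gth/2)} = \prod_{i=n_1+1}^{n_1+n_2}\abs{\sin(i\gth/2)}$ and $\prod_{j=1}^{n_2}\abs{\sin(j\gth/2)}$ share most of their factors when $n_1<n_2$, but in our regime $n_1\ge n_2$ they are disjoint blocks of equal length, so the ratio of the two products is $\prod (\text{shifted block})/\prod(\text{initial block})$, which is $O(N^{O(1)})$ by a standard bound on $\sum\log(1/\abs{\sin})$ over a block, namely $\sum_{i=1}^{L}\log(1/\abs{\sin(i\gth/2)}) = O(L\log(1/\abs\gth) + \log(1/\abs\gth))$. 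Choosing the constant $c$ small enough then absorbs this polynomial factor into $e^{-c'n_2}$, since $n_2\ge 1$ and, crucially, when $n_2$ is small (say $O(\log N)$) the claimed bound $e^{-cn_2}$ is itself only polynomially small, so no contradiction with the $F_{n_1,1}(-1)=1/N$ example arises.

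Finally, \eqref{r2m} follows from \eqref{r2} by the same device as in \refT{T1}: assume $a_1\ge\dots\ge a_m$, use $\abs{F_\aam(\eith)} = \prod_{j=2}^m\abs{F_{A_{j-1},a_j}(\eith)}$ with $A_{j-1}\ge a_j$, note that $\tau/N\le\abs\gth\le\pi$ implies $\tau/(A_{j-1}+a_j)\le\abs\gth\le\pi$ for every $j$ since $A_{j-1}+a_j\le N$, apply \eqref{r2} to each factor to get $\abs{F_{A_{j-1},a_j}(\eith)}\le e^{-ca_j}$, and multiply: $\abs{F_\aam(\eith)}\le e^{-c\sum_{j=2}^m a_j} = e^{-c(N-a_1)} = e^{-c\Ax}$. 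The main obstacle is the small-$\gth$ range of the two-letter estimate: making the telescoping bound on $\sum\log(1/\abs{\sin})$ over a block precise enough that the resulting polynomial loss is provably dominated by the exponential gain $(1-c\tau)^{cn_2}$ uniformly over all $n_1\ge n_2\ge 1$ and all $\abs\gth\in[\tau/N,\pi]$ — in particular checking that no cancellation-of-exponents disaster occurs when $n_1/n_2\to\infty$ and simultaneously $\gth\to 0$.
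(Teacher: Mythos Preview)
Your reduction of \eqref{r2m} to \eqref{r2} contains a genuine error. You write that $\tau/N\le|\gth|$ implies $\tau/(A_{j-1}+a_j)\le|\gth|$ ``since $A_{j-1}+a_j\le N$''. But the inequality goes the wrong way: $A_{j-1}+a_j\le N$ gives $\tau/(A_{j-1}+a_j)\ge\tau/N$, so the hypothesis $|\gth|\ge\tau/N$ does \emph{not} guarantee $|\gth|\ge\tau/(A_{j-1}+a_j)$ for the small-$j$ factors. The paper repairs this by applying \eqref{r2} only to those factors with $j\ge J$, where $J$ is the first index with $a_2+\dots+a_J\ge\Ax/2$; for such $j$ one has $A_j\ge N/2$, hence $|\gth|\ge\tau/N\ge(\tau/2)/A_j$, and \eqref{r2} applies with $\tau$ replaced by $\tau/2$. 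The remaining factors are bounded by $1$, and $\sum_{j\ge J}a_j\ge\Ax/2$ still gives exponential decay in $\Ax$.

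For \eqref{r2} itself, your direct attack on the sine-ratio product is a genuinely different route from the paper's, and you are candid that the small-$\gth$ range is where it gets stuck. The paper sidesteps exactly that difficulty by introducing the auxiliary generating function $f_{n}(y,q)=\prod_{j=0}^{n}(1-yq^j)^{-1}$, bounding $|f_{n_1}(Re^{\ii\phi},e^{\ii\gth})|$ via the elementary inequality $|(1-Re^{\ii\zeta})^{-1}|\le(1-R)^{-1}e^{-R(1-\cos\zeta)}$, summing the cosines to get the Dirichlet kernel $g_n(\gth)=\sin(n\gth/2)/\sin(\gth/2)$, and then recovering $F_{n_1,n_2}$ by Cauchy's formula in $y$ with $R=n_2/(n_1+n_2)$. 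This yields \eqref{r2} for $n_2$ large in one stroke, and the finitely many remaining $n_2$ are handled by a separate continuity-and-compactness argument. Your approach might be made to work, but the specific mechanism you propose---absorbing a polynomial-in-$N$ loss from $\sum\log(1/|\sin|)$ into the exponential gain $(1-c\tau)^{cn_2}$---fails as stated when $n_2$ is bounded and $n_1\to\infty$: the loss grows with $N$ while the gain does not, so the ``telescoping'' block estimate alone cannot close the argument uniformly.
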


\begin{proof}
We prove first \eqref{r2}.
%
For positive integer $n$ define
$$
f_{n}(y,q) = \prod_{j=0}^{n} (1-yq^j)^{-1}.
$$

For $0\le R<1$, we have (\eg{} by Taylor expansions) 
$e^{2R}\le \frac{1+R}{1-R}$,
and thus $e^{4R}\le \frac{(1+R)^2}{(1-R)^2}=1+\frac{4R}{(1-R)^2}$.
Hence, by convexity, for any real $\zeta$,
\begin{equation*}
  e^{2R(1-\cos\zeta)} 
\le
1+\frac{2R(1-\cos\zeta)}{(1-R)^2}
=
\frac{1+R^2-2R\cos\zeta}{(1-R)^2}
=\frac{|1-Re^{\ii\zeta}|^2}{(1-R)^2},
\end{equation*}
and thus
$$
\left|
(1-Re^{\ii \zeta})^{-1}
\right|
~\le~~
(1-R)^{-1} \, \exp\left(-R(1-\cos\zeta) \right).
$$
Consequently, by a simple trigonometric identity,
for any real $\phi$ and $\gth$,
\begin{align*}
\left |
f_{n_1}(R e^{\ii \phi},e^{\ii\theta})
\right |
~&\le~~
(1-R)^{-n_1-1} \,  \\
&\quad\times \, 
\exp\left(-R\Bigpar{n_1+1-\cos\bigpar{\phi+\frac{n_1}{2}\theta}
  \frac{\sin (n_1+1)\theta/2}{\sin\theta/2}}
                   \right)
\\
~&\le~~
(1-R)^{-n_1-1} \,\times \,
\exp\left(R\Bigpar{-n_1-1+
  \frac{\sin (n_1+1)\theta/2}{\sin\theta/2}}
                   \right).
\end{align*}
The function $g(\theta)=g_n(\gth)\=\frac{\sin n(\theta/2)}{\sin(\theta/2)}$,
where $n\ge1$, is an even function of $\theta$; 
is decreasing for $0\le\theta\le\pi/n$, as can be verified by
calculating $g'$; and
satisfies $|g(\theta)|\le g(\pi/n)$ for $\pi/n\le|\theta|\le\pi$.
Further, for  $n\ge 2$ and 
$0\le|\theta|\le\pi/n$,
\begin{equation*}
  \begin{split}
  g_n(\gth)
&=2\frac{\sin(n\gth/4)}{\sin(\gth/2)}\cos(n\gth/4)
=2g_{n/2}(\gth)\cos(n\gth/4)
\le n\cos(n\gth/4)
\\&
\le n\Bigpar{1-\frac{n^2\gthh}{40}}.	
  \end{split}
\end{equation*}
Let $\theta_0=\tau(n_1+n_2)^{-1}<\pi/(n_1+1)$.  
For $\gth_0\le|\gth|\le\pi$ we thus have
$$
|g_{n_1+1}(\gth)|
\le
g_{n_1+1}(\gth_0)
~\le~~
n_1+1 -\frac{n_1^3\theta_0^2}{40};
$$
whence, for $0\le R<1$, the estimate above yields
\begin{equation}\label{tho}
\left |
f_{n_1}(R e^{\ii \phi},e^{\ii\theta})
\right |
~\le~~
(1-R)^{-n_1-1}  \, \exp\left(
   -Rn_1^3\theta_0^2/40
                   \right).
\end{equation}

Combinatorially we know that $[y^{\ell}q^n]f_{n_1}(y,q)$ is the number
of partitions 
of $n$ having at most $\ell$ parts no one of which exceeds $n_1$.
As said in \refR{Rpart}, this equals $[q^n]\binom{n_1+\ell}{n_1}
F_{n_1+\ell}(q)$. Hence, using
Cauchy's integral formula, for any $R>0$,
\begin{equation*}
  \binom{n_1+n_2}{n_1}F_{n_1,n_2}(q)
=
[y^{n_2}] f_{n_1}(y,q)
=
\frac{1}{2\pi\ii}\int_{|y|=R} f_{n_1}(y,q)\frac{\dd y}{y^{n_2+1}}
\quad.
\end{equation*}
Consequently, \eqref{tho} implies that for $\gth_0\le|\gth|\le\pi$ and
$0<R<1$,
\begin{equation*}
  \binom{n_1+n_2}{n_1}\bigabs{F_{n_1,n_2}(q)}
\le
(1-R)^{-n_1-1} R^{-n_2}  \, \exp\left(
   -Rn_1^3\theta_0^2/40
                   \right).
\end{equation*}

Now choose $R=\rho:=n_2/(n_1+n_2)\le1/2$.
By Stirling's formula,
$$
\binom{n_1+n_2}{ n_1} 
= \Omega\bigpar{n_2^{-1/2}} \, (1-\rho)^{-n_1-1} \, \rho^{-n_2}
$$
and thus,
for $\gth_0\le|\gth|\le\pi$,
\begin{equation*}
\bigabs{F_{n_1,n_2}(q)}
\le
O(n_2\qq)  \, \exp\left(-\rho n_1^3\theta_0^2/40\right)
=
O(n_2\qq)  \, \exp\left(-\Omega(n_2)\right).
\end{equation*}
This shows \eqref{r2} for $n_2$ sufficiently large.  To handle the remaining
finitely many values of $n_2$ we shall show: for each $n_2\ge 1$ and
$\tau\in(0,1]$, there exists $\delta>0$ such that
\begin{equation} \label{r22}
\abs{F_{n_1,n_2}(\eith)}
~\le~~
1-\delta
\end{equation}
for all $n_1\ge n_2$ and $\tau/(n_1+n_2)\le|\gth|\le\pi$.
To do this, we use
\begin{equation}
  \label{aug}
  \begin{split}
\abs{F_{n_1,n_2}(\eith)}
~&=~~
\prod_{j=1}^{n_2}\frac{j}{n_1+j}\left|
                          \frac{\sin(n_1+j)\gth/2}{\sin (j\gth/2)}
                              \right|
\\
&=~~
\prod_{j=1}^{n_2}\frac{j\sin(\gth/2)}{\sin j\gth/2}
\cdot
\prod_{j=1}^{n_2}\frac{\bigabs{g_{n_1+j}(\gth)}}{n_1+j}
= \Pi_1\cdot \Pi_2
\quad.
  \end{split}
\end{equation}
Let $N=n_1+n_2$ and $\tau/N\le|\gth|\le\pi$.
For $n\ge n_1+1$ we have $|n\gth|\ge n_1|\gth|\ge N|\gth|/2\ge\tau/2$,
and thus the estimates above show that
\begin{equation*}
|g_n(\gth)|\le g_n(\tau/2n)  \le n(1-\tau^2/160)\quad.
\end{equation*}
Hence the final product $\Pi_2$ in \eqref{aug} is bounded by $1-\tau^2/160<1$.
The product $\Pi_1$ is a continuous function of $\gth$, and
equals 1 for $\gth=0$;
hence $|\Pi_1|\le 1+\tau^2/200$ for $|\gth|\le\eps$, where $\eps>0$ is
sufficiently small. (Recall that $n_2$ now is fixed.) 
This proves \eqref{r22} for $|\gth|\le\eps$.

For larger $|\gth|$ we use the factorization
\begin{equation}\label{sof}
  F_{n_1,n_2}(q)
=\binom{n_1+n_2}{n_2}\qw
\frac{(1-q^{n_1+1})\dotsm(1-q^{n_1+n_2})}{(1-q)\dotsm(1-q^{n_2})}.
\end{equation}
Let $0<|\gth_0|\le\pi$, and suppose that $k\ge0$ of the factors in the
denominator of \eqref{sof} vanish at $q=q_0:=e^{\ii\gth_0}$. Then
$0\le k\le n_2-1$, since $1-q_0\neq0$. There are at least $k$ factors
in the numerator of \eqref{sof} that vanish at $q_0$ (since $F$ is a
polynomial, and all factors have simple roots only); for
$q=e^{\ii\gth}$, each of these factors is bounded by $N|q-q_0|\le
N|\gth-\gth_0|$ while every factor is bounded by 2; hence the
numerator of \eqref{sof} is $O(N^k|\gth-\gth_0|^k)$. Let $J$ be an
interval around $\gth_0$ such that the denominator of \eqref{sof} does
not vanish at any $q=e^{\ii\gth}\neq q_0$ with $\gth\in\bar J$; then
the denominator is $\Theta(|\gth-\gth_0|^k)$ for 
$\gth\in J$.
Finally, the binomial coefficient in \eqref{sof} is
$\Theta(n_1^{n_2})$.

Combining these estimates, we see that uniformly for $\gth\in J$,
\begin{equation*}
\lrabs{  F_{n_1,n_2}(e^{\ii\gth})}  
= O \Bigpar{\frac{N^k|\gth-\gth_0|^k}{n_1^{n_2}|\gth-\gth_0|^k}}
=O\bigpar{n_1^{k-n_2}}
=O\bigpar{n_1^{-1}}.
\end{equation*}
Since the set $\eps\le|\gth|\le\pi$ may by covered by a finite number
of such interval $J$, 
$\lrabs{  F_{n_1,n_2}(e^{\ii\gth})}  =O\bigpar{n_1^{-1}}$ uniformly for
 $\eps\le|\gth|\le\pi$. Consequently \eqref{r22} holds for all such
$\gth$ if $n_1$ is sufficiently large.

It remains to verify \eqref{r22} for each fixed $n_2$ and a finite
number of $n_1$; in other words, that for each $n_2\ge1$ and $n_1\ge
n_2$, there exists $\gd>0$ such that \eqref{r22} holds.
To see this, note that the events $M_{n_1,n_2}=0$ and $M_{n_1,n_2}=1$
both have positive probability. It follows that 
$\abs{F_{n_1,n_2}(\eith)}<1$ for every $\gth$ with $0<|\gth|\le\pi$,
and \eqref{r22} follows.
This completes the proof of \eqref{r2}.

To prove \eqref{r2m}, we assume as we may that $a_1\ge\dots\ge a_m$
and use the factorization \eqref{F2}.
Let $J$ be the first index such that $a_2+\dots+a_J\ge\Ax/2$.
For $j\ge J$, then $A_{j-1}+a_j=A_j\ge A_J\ge a_1+\Ax/2\ge N/2$, and
thus $A_j|\gth|\ge N|\gth|/2\ge\tau/2$; hence \eqref{r2} yields
\begin{equation*}
 \bigabs{F_{A_{j-1},a_j}(\eith)} \le e^{-c(\tau/2) a_j}. 
\end{equation*}
We thus obtain from \eqref{F2}, since each $F_{n_1,n_2}$ is a
probability generating function and thus is bounded by 1 on the unit circle,
\begin{equation*}
|F_\aam(\eith)|=\prod_{j=2}^m |F_{A_{j-1},a_j}(\eith)|
\le \prod_{j=J}^m e^{-c(\tau/2) a_j}
\le  e^{-c(\tau/2) \Ax/2},
\end{equation*}
because $\sum_{j=J}^m a_j\ge \Ax/2$.
This proves \eqref{r2m} (redefining  $c(\tau)$).
\end{proof}

\begin{theorem}
  \label{Tlocal}
There exists a positive constant $c$
such that for every $C$,
the following is true.  
Uniformly for all $\aam$ such that $\ax\le C e^{c\Ax}$ and all integers $k$,
  \begin{equation}\label{localxxx}
\P(\maam=k)
=\frac1{\sqrt{2\pi}\gs}\Bigpar{e^{-(k-\mu)^2/(2\gs^2)}+O\Bigparfrac1{\Ax}}.	
  \end{equation}
\end{theorem}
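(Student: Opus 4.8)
The plan is to assemble \refL{Lmagnus}, \refL{LC1}, and \refL{Lsigma}; essentially all the real work is already contained in those lemmas, and what remains is a short bookkeeping argument about constants. By \refL{Lmagnus}, for every $\aam$ and every integer $k$,
\begin{equation*}
\Bigabs{\P(\maam=k)-\frac1{\sqrt{2\pi}\gs}e^{-(k-\mu)^2/(2\gs^2)}}
\le\int_{\tau/N}^\pi|F_\aam(\eith)|\dd\gth+O\Bigpar{\frac1{\gs\Ax}}.
\end{equation*}
Since the assertion \eqref{localxxx} says precisely that the left-hand side above is $\frac1{\sqrt{2\pi}\gs}\,O\bigpar{1/\Ax}=O\bigpar{1/(\gs\Ax)}$, it suffices to prove that the tail integral $\int_{\tau/N}^\pi|F_\aam(\eith)|\dd\gth$ is $O\bigpar{1/(\gs\Ax)}$, uniformly over the tuples $\aam$ allowed by the hypothesis.

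First I would invoke \refL{LC1}, taking $\tau$ to be the constant fixed right after \refL{LG1}. Then \eqref{r2m} supplies a constant $c_1=c_1(\tau)>0$, depending only on the universal $\tau$, with $|F_\aam(\eith)|\le e^{-c_1\Ax}$ throughout $\tau/N\le|\gth|\le\pi$; hence the tail integral is at most $\pi e^{-c_1\Ax}$. Next I would use \refL{Lsigma}, which gives $\gs\le N\Ax\qq$ and therefore $\gs\Ax\le N\Ax^{3/2}$, so it is enough to show that $N\Ax^{3/2}e^{-c_1\Ax}=O(1)$. Now set $c\=c_1/2$ --- this is the universal constant claimed in \refT{Tlocal}. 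Under the hypothesis $\ax\le Ce^{c\Ax}$ we have $N=\ax+\Ax\le Ce^{c_1\Ax/2}+\Ax$, whence
\begin{equation*}
N\Ax^{3/2}e^{-c_1\Ax}\le C\Ax^{3/2}e^{-c_1\Ax/2}+\Ax^{5/2}e^{-c_1\Ax},
\end{equation*}
and the right-hand side is bounded by a constant $B=B(C,c_1)<\infty$ because $x\mapsto x^{3/2}e^{-c_1x/2}$ and $x\mapsto x^{5/2}e^{-c_1x}$ are bounded on $[0,\infty)$. Therefore $\pi e^{-c_1\Ax}\le\pi B/(N\Ax^{3/2})=O\bigpar{1/(\gs\Ax)}$, which is what was needed; together with \refL{Lmagnus} this yields \eqref{localxxx}.

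The only point that needs a little care is the ordering of the constants: the exponent $c$ in the hypothesis $\ax\le Ce^{c\Ax}$ must be chosen strictly below the decay rate $c_1(\tau)$ of \refL{LC1}, so that the exponential $e^{-c_1\Ax}$ can absorb both the polynomial loss $\Ax^{3/2}$ coming from $\gs\asymp N\Ax\qq$ and the factor $N\le\ax+\Ax$; taking $c=c_1/2$ is convenient, but any $c<c_1$ works, at the cost of a larger implied constant (which is allowed to depend on $C$). There is no further obstacle: the substance of the proof lives entirely in \refL{LC1}, whose partition identity and Cauchy-integral estimate control $F_\aam$ on the range $\tau/N\le|\gth|\le\pi$ away from the origin, while \refL{Lmagnus} already handles the near-origin contribution via the Gaussian. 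Tuples with $\Ax$ bounded require no separate treatment: the hypothesis then bounds $N$, so only finitely many such tuples occur for each $C$, and $1/\Ax$ is bounded below, making \eqref{localxxx} trivial for a large enough implied constant.
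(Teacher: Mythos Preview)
Your proof is correct and follows essentially the same approach as the paper: combine \refL{Lmagnus} with the uniform exponential bound \eqref{r2m} from \refL{LC1}, then use \refL{Lsigma} to convert $\gs\Ax e^{-c_1\Ax}$ into a quantity controlled by $Ne^{-c\Ax}$ for any $c<c_1$. The paper phrases the bookkeeping slightly differently (writing the error as $O\bigl((1+Ne^{-c\Ax})/\Ax\bigr)$ and then observing $Ne^{-c\Ax}=\ax e^{-c\Ax}+O(1)$), but the substance is identical; your choice $c=c_1/2$ and explicit handling of $N\Ax^{3/2}e^{-c_1\Ax}$ amount to the same estimate.
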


\begin{proof}
Let $c_1=c(\tau)$ be the constant in \refL{LC1}.
Then, Lemmas \refand{Lmagnus}{LC1} yield
    \begin{equation*}
\P(\maam=k)
=\frac1{\sqrt{2\pi}\gs}\Bigpar{e^{-(k-\mu)^2/(2\gs^2)}
+O\Bigpar{\frac1{\Ax}+\gs e^{-c_1\Ax}}}.	
  \end{equation*}
For any fixed $c<c_1$ we have, using \refL{Lsigma}, 
$\gs\Ax e^{-c_1\Ax}=O(Ne^{-c\Ax})$
and thus
    \begin{equation*}
\P(\maam=k)
=\frac1{\sqrt{2\pi}\gs}\Bigpar{e^{-(k-\mu)^2/(2\gs^2)}
+O\Bigpar{\frac{1+N e^{-c\Ax}}{\Ax}}}.	
  \end{equation*}
The result follows, since $N e^{-c\Ax}=\ax e^{-c\Ax}+\Ax e^{-c\Ax}
=\ax e^{-c\Ax}+O(1)$.
\end{proof}

\subsection{Log-concavity}

Let us review the proof of \refT{Tlocal} with the intention of greater
accuracy.  The goal is to prove log-concavity in some range.
For concreteness, let $\bfa=(n,n)$.  Then
$\sigma^2$ is of order $n^3$, and for sufficient
accuracy we take the Taylor series in the exponent of \eqref{r3} out
to $O(\theta^{10})$.  This yields, for some polynomials $p_k(n)$ of
degree $k+1$,
\begin{equation*}
  \begin{split}
F_{n,n}(e^{\ii\theta})
&=
\exp\bigpar{\ii\mu\gth-\gss\gth^2/2+p_4(n)\gth^4+p_6(n)\gth^6+p_8(n)\gth^8
 +O(n^{11}\gth^{10})}
\\
&=
e^{\ii\mu\gth-\gss\gth^2/2}
\bigpar{1+p_4(n)\gth^4+p_6(n)\gth^6+p_8(n)\gth^8
\\&\hskip6em
+\tfrac12p_4^2(n)\gth^8+p_4(n)p_6(n)\gth^{10}+\tfrac16p_4^3(n)\gth^{12}
 +O(n^{11}\gth^{10})}
  \end{split}
\end{equation*}
Arguing as in the proof of \refL{Lmagnus} but using this estimate
instead of \eqref{pag} for $|\gth|\le \tau/N$, one easily obtains,
after the substitution $\gth=t/\gs$, for any $k$ and with $x\=(k-\mu)/\gs$,
\begin{equation*}
  \begin{split}
\P(M_{n,n}=k)=\frac1{2\pi}\intoooo e^{-t^2/2-\ii tx}
\Bigpar{1+\frac{p_4(n)}{\gs^4}t^4+\dots+\frac{p_4^3(n)}{6\gs^{12}}t^{12}} 
\frac{\dd t}{\gs} 
+ O(n^{-4}\gs\qw).
  \end{split}
\end{equation*}
Letting $\gf(x)\=(2\pi)\qqw e^{-x^2/2}$ denote the normal density
function, and $\gfd{j}$ its derivatives, we obtain by Fourier inversion
\begin{equation}\label{pmnn}
  \begin{split}
\P(M_{n,n}=k)
&=
\gs\qw
\Bigpar{\gf(x)+\frac{p_4(n)}{\gs^4}\gfd4(x)+\dots+\frac{p_4^3(n)}{6\gs^{12}}
\gfd{12}(x)+ O(n^{-4})}
\\
&=
\frac1{\sqrt{2\pi}\gs} e^{-x^2/2}\bigpar{1+Q(n,x)+O(n^{-4})},
  \end{split}
\raisetag{20pt}
\end{equation}
where $Q(n,x)$ is $\gs^{-12}$ times a certain polynomial  in $n$ and
$x$ of degree 17 in $n$; thus for $x=O(1)$ we have 
$Q(n,x)=O(n\qw)$ 
and similarly, for derivatives with respect to $x$,
$Q'(n,x)=O(n\qw)$ and $Q''(n,x)=O(n\qw)$.
($Q(n,x)$ can easily be computed explicitly using computer algebra,
but we do not have to do it.)

Replacing $k$ by $k\pm1$ in \eqref{pmnn} we find,
for $x=O(1)$,
\begin{equation*}
  \begin{split}
\P(M_{n,n}=k\pm1)
&=
\frac1{\sqrt{2\pi}\gs}
e^{-(x\pm\gs\qw)^2/2}\bigpar{
 1+Q(n,x)\pm\gs\qw Q'(n,x)+O(n^{-4})},
  \end{split}
\end{equation*}
and thus
\begin{equation*}
  \begin{split}
\P(M_{n,n}&=k-1) \P(M_{n,n}=k+1)
\\
&=
\frac1{{2\pi}\gss} 
e^{-x^2-\gs\qww}\bigpar{
(1+Q(n,x))^2-\gs\qww Q'(n,x)^2+O(n^{-4})}.
\\
&=
e^{-\gs\qww}\P(M_{n,n}=k)^2\bigpar{1+O(n^{-4})}.
  \end{split}
\end{equation*}
Hence, for $x=O(1)$, 
i.e., $k=\mu+O(\gs)$,
\begin{equation}\label{sjw}
  \begin{split}
\P(M_{n,n}=k)^2
&-
\P(M_{n,n}=k-1) \P(M_{n,n}=k+1)
\\
&=
\bigpar{\gs\qww+O(n^{-4})}\P(M_{n,n}=k)^2
=
\frac1{{2\pi}\gs^4} 
e^{-x^2}\bigpar{1+O(n^{-1})}.
  \end{split}
\end{equation}

In particular, this is positive for large $n$.
This gives:

\begin{theorem}[A log-concavity result]
  \label{TC2}
For each constant $C$ we have $n_0$ such that
for $n\ge n_0$ and $|j-\mu| \le C \sigma$
$$
c_j^2 \ge c_{j-1} c_{j+1},
$$
where
$$
c_j \= [q^j] \binom{2n}{ n}_q = \binom{2n}{ n}\P(M_{n,n}=j).
$$
\end{theorem}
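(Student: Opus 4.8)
The plan is to obtain Theorem~\ref{TC2} as an essentially immediate corollary of the refined estimate~\eqref{sjw} derived just above: the analytic effort has already been spent on the Edgeworth-type expansion \eqref{pmnn}, and Theorem~\ref{TC2} is just the formalisation of the parenthetical remark ``In particular, this is positive for large $n$'' that follows \eqref{sjw}. So the proof is a short sign-chasing argument.

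First I would pass from the coefficients $c_j$ to probabilities. Since $c_j=\binom{2n}{n}\P(M_{n,n}=j)$ by definition, we have
\begin{equation*}
c_j^2-c_{j-1}c_{j+1}
=\binom{2n}{n}^2\Bigpar{\P(M_{n,n}=j)^2-\P(M_{n,n}=j-1)\,\P(M_{n,n}=j+1)},
\end{equation*}
so, as $\binom{2n}{n}^2>0$, the inequality $c_j^2\ge c_{j-1}c_{j+1}$ is equivalent to positivity of the bracketed quantity. Next I would apply \eqref{sjw} with $k=j$ and $x\=(j-\mu)/\gs$: the hypothesis $|j-\mu|\le C\gs$ says precisely that $|x|\le C$, i.e.\ $x=O(1)$ with implied constants depending only on $C$ (the shift to $j\pm1$ moves $x$ by at most $\gs\qw\to0$, so it too stays $O(1)$, as was already used in deriving \eqref{sjw}). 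Hence \eqref{sjw} gives
\begin{equation*}
\P(M_{n,n}=j)^2-\P(M_{n,n}=j-1)\,\P(M_{n,n}=j+1)
=\frac{1}{2\pi\gs^4}\,e^{-x^2}\bigpar{1+O(n\qw)},
\end{equation*}
uniformly over $|x|\le C$. On that window $e^{-x^2}\ge e^{-C^2}>0$, while $\gs\qw>0$ always, so the right-hand side is at least $\tfrac12(2\pi\gs^4)\qw e^{-C^2}>0$ once the factor $1+O(n\qw)$ exceeds $\tfrac12$, which happens for all $n\ge n_0$ with $n_0=n_0(C)$. Multiplying back by $\binom{2n}{n}^2$ yields $c_j^2-c_{j-1}c_{j+1}>0$ for every such $j$, proving the theorem.

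The only thing to be careful about — and it is the reason the statement quantifies $n_0$ after $C$ — is that the $O(n\qw)$ error in \eqref{sjw} is uniform across $|x|\le C$ but with a constant that deteriorates as $C$ grows, so $n_0$ genuinely depends on $C$. Beyond that there is no real obstacle in the proof of Theorem~\ref{TC2} itself: all the difficulty lay upstream, in pushing the characteristic-function/Taylor expansion far enough (to order $\gth^{12}$ in the exponent, producing the correction $Q(n,x)$ in \eqref{pmnn}) that the leading term $\gs\qww\P(M_{n,n}=k)^2$ of the two-term cancellation $\P(M_{n,n}=k)^2-\P(M_{n,n}=k-1)\P(M_{n,n}=k+1)$ dominates the $O(n^{-4})$ remainder — which \eqref{pmnn}--\eqref{sjw} already establish.
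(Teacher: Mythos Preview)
Your proposal is correct and matches the paper's approach exactly: the paper derives \eqref{sjw} and then simply remarks ``In particular, this is positive for large $n$,'' stating Theorem~\ref{TC2} as the immediate consequence. Your write-up just makes explicit the passage from probabilities to the coefficients $c_j$ and the uniformity over $|x|\le C$, which is precisely what the paper leaves implicit.
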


We note that the ``mysterious''
numbers appearing in our earlier table for the choice $j=n^2/2-1$
are asymptotically
$$
 \frac{1}{2\pi\sigma^4} \, \binom{2n}{n}^2
\sim
\frac{18}{\pi n^6} \, \binom{2n}{n}^2
\sim
\frac{18}{\pi^2} n^{-7}2^{4n}.
$$

\begin{remark}
  This argument for log-concavity in the central region does not use
  any special properties of the distribution; although we needed
  several terms in the asymptotic expansion above, it was only to see
  that they are sufficiently smooth, and the main term in the final
  result \eqref{sjw} comes from the main term
  $e^{-x^2/2}/(\sqrt{2\pi}\gs)$ in \eqref{localxxx}.
What we have shown is just that the convergence 
to the log-concave Gaussian function 
in the local limit  theorem is sufficiently regular for the
  log-concavity of the limit to transfer to $\P(M_{n,n}=k)$ for
  $k=\mu+O(\gs)$ and sufficiently large $n$.
\end{remark}

\section{Final comments}\label{Sfine}

Suppose that $\Ax\not\to\infty$. 
We may, as usual, assume that $a_1\ge\dots\ge a_m$.
By considering a subsequence (if
necessary), we may assume that $\Ax:=N-\ax=a_2+\dots+a_m$ is a constant; 
this entails that $m$ is bounded, so by again considering a
subsequence, we may assume that $m$ and $a_2,\dots,a_m$ are constant.
We thus study the case when $a_1\to\infty$ with fixed $a_2,\dots,a_m$.

In this case, the number of inversions between indices $2,\dots,m$ is
$O(1)$, which is asymptotically negligible. Ignoring these, we can
thus consider the random word
as $\Ax$ letters $2,\dots,m$ inserted in $a_1$ 1's, and the number of
inversions is the sum of their positions, counted from the end.
It follows easily, either probabilistically or by calculating the
characteristic function from \eqref{F}, that $\maam/N$, or
equivalently $\maam/a_1$, converges in distribution to the sum
$\sum_{j=1}^{\Ax} U_j$ of $\Ax$ independent random variables $U_j$ with
the uniform distribution on $[0,1]$. 
Equivalently, since $\gs^2\sim n_1^2\Ax/12\sim N^2\Ax/12$,
\begin{equation*}
\frac{\maam-\mu(a_1, \dots, a_m) }{\sigma(a_1, \dots , a_m)} \dto
\sqrt{\frac{12}{\Ax}} \sum_{j=1}^{\Ax} (U_j-\tfrac12),
\end{equation*}
where $\dto$ denotes convergence in distribution.
This limit is clearly not normal for any finite $\Ax$. 
(However, its distribution is close to standard normal for large $\Ax$.
Note that it is normalized to mean 0 and variance 1.)

\newcommand\AAP{\emph{Adv. Appl. Probab.} }
\newcommand\JAP{\emph{J. Appl. Probab.} }
\newcommand\JAMS{\emph{J. \AMS} }
\newcommand\MAMS{\emph{Memoirs \AMS} }
\newcommand\PAMS{\emph{Proc. \AMS} }
\newcommand\TAMS{\emph{Trans. \AMS} }
\newcommand\AnnMS{\emph{Ann. Math. Statist.} }
\newcommand\AnnPr{\emph{Ann. Probab.} }
\newcommand\CPC{\emph{Combin. Probab. Comput.} }
\newcommand\JMAA{\emph{J. Math. Anal. Appl.} }
\newcommand\RSA{\emph{Random Struct. Alg.} }
\newcommand\ZW{\emph{Z. Wahrsch. Verw. Gebiete} }
\newcommand\DMTCS{\jour{Discr. Math. Theor. Comput. Sci.} }

\newcommand\AMS{Amer. Math. Soc.}
\newcommand\Springer{Springer-Verlag}
\newcommand\Wiley{Wiley}

\newcommand\vol{\textbf}
\newcommand\jour{\emph}
\newcommand\book{\emph}
\newcommand\inbook{\emph}
\def\no#1#2,{\unskip#2, no. #1,} 
\newcommand\toappear{\unskip, to appear}

\newcommand\webcite[1]{
\texttt{\def~{{\tiny$\sim$}}#1}\hfill\hfill}
\newcommand\webcitesvante{\webcite{http://www.math.uu.se/~svante/papers/}}
\newcommand\arxiv[1]{\webcite{arXiv:#1.}}

\def\nobibitem#1\par{}

\end{document}